\documentclass[11pt]{amsart}
\usepackage{amsmath,amssymb,amsthm}
\usepackage[colorlinks=true,linkcolor=blue,citecolor=blue,urlcolor=blue]{hyperref}

\newtheorem{theorem}{Theorem}[section]
\newtheorem{lemma}[theorem]{Lemma}
\newtheorem{proposition}[theorem]{Proposition}
\newtheorem{corollary}[theorem]{Corollary}
\theoremstyle{remark}
\newtheorem{remark}[theorem]{Remark}

\numberwithin{equation}{section}

\newcommand{\R}{\mathbb{R}}
\newcommand{\Sn}{\mathbb{S}^{n}}
\newcommand{\bg}{\bar{g}}
\newcommand{\bn}{\bar{\nabla}}
\newcommand{\td}{\widetilde}
\newcommand{\wh}{\widehat}
\newcommand{\cL}{\mathcal{L}}
\newcommand{\cP}{\mathcal{P}}
\newcommand{\cC}{\mathcal{C}}
\newcommand{\cF}{\mathcal{F}}
\newcommand{\cG}{\mathcal{G}}
\newcommand{\norm}[1]{\lVert #1 \rVert}
\newcommand{\abs}[1]{\lvert #1 \rvert}

\title[Supercritical dual $k$-Minkowski flows]{Supercritical dual $k$-Minkowski flows with general prescribed data: \\
a priori estimates and asymptotic convergence}

\author{Hongyi Sheng}
\address{Institute for Theoretical Sciences, Westlake Institute for Advanced Study, Westlake University, Hangzhou 310030, China}
\email{shenghongyi@westlake.edu.cn}

\author{Weimin Sheng}
\address{School of Mathematical Sciences, Zhejiang University, Hangzhou 310058, China}
\email{weimins@zju.edu.cn}

\author{Jiazhuo Yang}
\address{School of Mathematical Sciences, Zhejiang University, Hangzhou 310058, China}
\email{yangjiazhuo@zju.edu.cn}

\date{}

\subjclass[2020]{53E40, 35K55, 52A20}
\keywords{dual Christoffel--Minkowski problem, dual curvature measures, curvature flows, a priori estimates, exponential convergence}

\begin{document}

\begin{abstract}
Let $1 \le k \le n$, let $f$ be a positive smooth function on the unit sphere, and consider the normalized flow
\[
\partial_t X = -f(\nu)\abs{X}^{\alpha}\sigma_k(\kappa)\nu + \beta X,
\qquad \beta = \binom{n}{k} .
\]
In the supercritical range $\alpha > k+1$ we prove time-independent $C^0$ and $C^1$ bounds, two-sided bounds for the speed factor $f(\nu)\abs{X}^{\alpha}\sigma_k(\kappa)$, and two-sided principal-curvature bounds. Consequently the flow exists for all time and converges exponentially in $C^{\infty}$, from every smooth strictly convex initial hypersurface enclosing the origin, to the unique smooth strictly convex solution of $f(x)r^{\alpha}\sigma_k(\kappa) = \beta u$. The convergence is driven by the relative residual $p = \partial_t\log u = \beta - f(x)r^{\alpha}\sigma_k(\kappa)/u$, whose $L^{\infty}$ norm is nonincreasing and decays with the explicit exponent $\beta(\alpha-k-1)$; the residual estimates are proved before, and independently of, the curvature estimates. The principal-radius estimate adapts the inverse-concavity argument of Li--Sheng--Wang: in inverse Gauss coordinates, the terms involving $\bn\log f$ and $\bn^2\log f$ are absorbed by the combined inverse-concavity--radial quadratic form. Existence of a strictly convex solution of the stationary equation in this range, for arbitrary positive angular data, was previously obtained by Bryan--Ivaki--Scheuer through an expanding-type flow started from a barrier; the contribution here is the convergence of the normalized contracting flow from arbitrary initial data, with an explicit exponential rate, together with uniqueness of the limit.
\end{abstract}

\maketitle

\section{Introduction}\label{sec:intro}

The $q$-dual Christoffel--Minkowski problem originates in the work of Huang, Lutwak, Yang and Zhang \cite{HLYZ16}, who introduced the dual curvature measures and posed their associated prescription problems. It is the counterpart in the dual Brunn--Minkowski theory of the classical Christoffel--Minkowski problem, in which one prescribes an intermediate curvature measure (or, equivalently, an elementary symmetric function of the principal radii); see \cite{GLL12, GLM09, GM03}. The parameter $q$ records the radial weight in the dual measure. We emphasize at the outset that this weight does not disappear at $q = 0$ at the level of the prescribed-density equation; see the discussion following \eqref{eq:pde}.

We now formulate the geometric measure and its equation. Let $K \subset \R^{n+1}$ be a convex body containing the origin in its interior, let $\rho_K$ be its radial function, and let $\alpha_K^{*}(\omega)$ denote the reverse radial Gauss image of a Borel set $\omega \subset \Sn$. We use the standard notions of convex geometry, for which \cite{Sch14} is the general reference. The $q$-th dual curvature measure of Huang, Lutwak, Yang and Zhang is
\begin{equation}\label{eq:measure}
\td{C}_q(K, \omega) = \frac{1}{n+1}\int_{\alpha_K^{*}(\omega)} \rho_K(\xi)^q \, d\xi .
\end{equation}
When $q > 0$, this measure has the weighted cone-volume representation
\begin{equation}\label{eq:cone}
\td{C}_q(K, \omega) = \frac{q}{n+1}\int_{C_K(\omega)} \abs{y}^{q-n-1} \, dy,
\end{equation}
where
\[
C_K(\omega) = \{ s\xi : \xi \in \alpha_K^{*}(\omega),\ 0 < s \le \rho_K(\xi) \}.
\]
Thus $\td{C}_q$ records a radially weighted volume of cones cut out by normal directions; for $q = n+1$ it is the usual cone-volume measure. This weighted cone-volume interpretation is the source of the power of the radial function appearing below.

For the rest of this paper we work in the smooth category. If $K$ is smooth and strictly convex, all measures involved are absolutely continuous with respect to the spherical Lebesgue measure $dx$, and their densities are computed by the change of variables between the radial parameter $\xi$ and the normal parameter $x$. Comparing the two expressions for the area element of $\partial K$,
\begin{equation}\label{eq:area-element}
dA = \frac{r^{n+1}}{u}\, d\xi = \sigma_n(\lambda)\, dx ,
\end{equation}
gives $d\xi = u\,r^{-n-1}\sigma_n(\lambda)\, dx$, and hence
\begin{equation}\label{eq:Cq-density}
\frac{d\td{C}_q(K, \cdot)}{dx} = \frac{1}{n+1}\, u\, r^{q-n-1}\sigma_n(\lambda),
\end{equation}
where $\lambda = (\lambda_1, \dots, \lambda_n)$ are the principal radii of $\partial K$, $u$ is the support function, and $r = \abs{X}$. For instance, the unit ball has $\td{C}_q(B_1, \Sn) = \abs{\Sn}/(n+1)$.

For $1 \le k \le n$ and smooth strictly convex $K$, Li, Sheng and Wang \cite{LSW20} considered the intermediate measure
\begin{equation}\label{eq:Ckq}
C_{k,q}(K, \omega) := \int_{\omega} \frac{1}{\sigma_{n-k}(\lambda_K(x))} \, d\td{C}_q(K, x) .
\end{equation}
Since the integrand in \eqref{eq:Ckq} involves the classical principal radii, we formulate $C_{k,q}$ directly for smooth strictly convex bodies; this is the only category needed in the present paper. Write $u$, $r$ and $\kappa = (\kappa_1, \dots, \kappa_n)$ for the support function, radial function and principal curvatures in inverse Gauss-map coordinates. Using \eqref{eq:Cq-density} and the identity
\begin{equation}\label{eq:sigma-quotient-intro}
\sigma_k(\kappa) = \frac{\sigma_{n-k}(\lambda)}{\sigma_n(\lambda)},
\end{equation}
we obtain
\begin{equation}\label{eq:Ckq-density}
\frac{dC_{k,q}(K, \cdot)}{dx}
= \frac{1}{n+1}\, u\, r^{q-n-1}\, \frac{\sigma_n(\lambda)}{\sigma_{n-k}(\lambda)}
= \frac{1}{n+1}\, \frac{u\, r^{q-n-1}}{\sigma_k(\kappa)} .
\end{equation}
Consequently, prescribing $C_{k,q}(K, \cdot) = \phi(x)\, dx$ is equivalent to the equation
\begin{equation}\label{eq:pde}
(n+1)\phi(x) r^{n+1-q}\sigma_k(\kappa) = u .
\end{equation}

When $k = n$, the factor in \eqref{eq:Ckq} is one, so that $C_{n,q} = \td{C}_q$ and \eqref{eq:pde} is the dual $q$-Minkowski equation. For $1 \le k < n$, we call the prescription of $C_{k,q}$ the $q$-dual Christoffel--Minkowski problem. Two caveats concerning this terminology are in order. First, even at $q = 0$ equation \eqref{eq:pde} reads $(n+1)\phi\, r^{n+1}\sigma_k(\kappa) = u$: writing $b[u] = \bn^2 u + u\bg$ for the curvature-radius tensor, so that $\sigma_k(\kappa) = \sigma_{n-k}(b)/\sigma_n(b)$, this is a Hessian-quotient equation with radial and support-function dependence, not an equation prescribing an elementary symmetric function of the principal radii alone. The vanishing of the radial weight in the integral \eqref{eq:measure} at $q = 0$ does not remove the reverse radial Gauss image or the Jacobian in \eqref{eq:area-element}. Thus the $q = 0$ prescription is not the classical Christoffel--Minkowski problem of \cite{GM03, GLM09}. Second, the equation also differs from the $L_p$ dual Christoffel--Minkowski problem of Ding--Li \cite{DL23}, whose smooth form is
\[
\sigma_j(b[u]) = \psi\, u^{p-1} r^{j+1-q} .
\]
For $1 \le k < n$, our curvature factor is the quotient $\sigma_{n-k}(b)/\sigma_n(b)$ rather than $\sigma_j(b)$, and the radial and support weights differ as well. The two prescriptions therefore involve different operators in the intermediate-curvature cases.

The natural parabolic equation associated with \eqref{eq:pde} is the contracting flow
\begin{equation}\label{eq:flow}
\partial_t X = -f(\nu)\abs{X}^{\alpha}\sigma_k(\kappa)\nu, \qquad \alpha = n+1-q,
\end{equation}
where $\nu$ is the outer unit normal and $f \in C^{\infty}(\Sn)$ is positive. Throughout this paper the elementary symmetric functions are unnormalized, so
\[
\sigma_k(\kappa) = \sum_{i_1 < \cdots < i_k} \kappa_{i_1}\cdots\kappa_{i_k},
\qquad
\beta := \sigma_k(1, \dots, 1) = \binom{n}{k}.
\]
We also adopt the convention $\sigma_0 = 1$, which covers the endpoint cases $k = 1$ and $k = n$ in the statements below.
After a time-dependent dilation, \eqref{eq:flow} becomes the normalized flow
\begin{equation}\label{eq:normflow}
\partial_t X = -f(\nu)r^{\alpha}\sigma_k(\kappa)\nu + \beta X .
\end{equation}
Its stationary equation in inverse Gauss-map coordinates is
\begin{equation}\label{eq:stationary}
f(x)r^{\alpha}\sigma_k(\kappa) = \beta u .
\end{equation}
Taking $f = \beta(n+1)\phi$ and $\alpha = n+1-q$ identifies \eqref{eq:stationary} with \eqref{eq:pde}. The supercritical condition
\begin{equation}\label{eq:supercritical}
\alpha > k+1
\end{equation}
is precisely $q < n-k$. In particular, for $k < n$ the positive-index range treated here is
\begin{equation}\label{eq:q-range}
0 < q < n-k .
\end{equation}

We next place the result in the development of these radial curvature flows. For $k = n$, Li, Sheng and Wang \cite{LSW20a} studied $f(\nu)r^{\alpha}K$ with arbitrary positive smooth angular factor $f$. They proved convergence for general data when $q < 0$, and in the positive range obtained the corresponding result under evenness and origin-symmetry; they also exhibited the obstruction to nonsymmetric convergence. The case $k = n$ of Theorem \ref{thm:main} below corresponds to $\alpha > n+1$, that is, $q < 0$, and thus falls within this general-data range. For general $1 \le k \le n$, Li, Sheng and Wang \cite{LSW20} then studied the speed $r^{\alpha}\sigma_k$. They proved smooth convergence after normalization for $\alpha \ge k+1$, but, because no analogue of the variational functional used for $k = n$ was available, their general-$k$ analysis was restricted to the isotropic case $f \equiv 1$.

The class of admissible initial hypersurfaces was enlarged by Li, Xu and Zhang \cite{LXZ22}. For the homogeneous speeds
\[
r^{\td{\alpha}/\td{\beta}}\, \sigma_k^{1/\td{\beta}},
\]
they extended the uniformly convex theory in the relevant parameter ranges to star-shaped $k$-convex hypersurfaces. More recently, Sheng and Yang \cite{SY27} replaced the power of $r$ by a nonhomogeneous radial profile and studied speeds of the form $\Phi(r)\sigma_k^{\theta}$ for star-shaped $k$-convex initial data. These results concern isotropic dependence on the normal direction. The case needed for prescribing a nonconstant density in \eqref{eq:pde} is instead the angularly anisotropic factor $f(\nu)$ in \eqref{eq:flow}. The present paper addresses this case for the unpowered curvature $\sigma_k$ in the supercritical range.

On the elliptic side, existence for \eqref{eq:pde} with arbitrary positive smooth density in the range \eqref{eq:q-range} is in fact already known. Bryan, Ivaki and Scheuer proved in \cite[Corollary 3.5]{BIS21} that for every positive $\psi \in C^{\infty}(\Sn)$ and $q + k < n$ there exists a strictly convex hypersurface satisfying
\begin{equation}\label{eq:bis}
\abs{X}^{n+1-q}\sigma_k(\kappa) = \langle X, \nu\rangle\, \psi(\nu) .
\end{equation}
Since $\langle X, \nu\rangle = u$, choosing $\psi = \big((n+1)\phi\big)^{-1}$ turns \eqref{eq:bis} into precisely \eqref{eq:pde}, and the constraint $q + k < n$ is precisely the supercritical condition \eqref{eq:supercritical}; for $q > 0$ this is the positive-index range \eqref{eq:q-range}. Their method is a curvature flow without a global term, of expanding (inverse) type, started from a round lower barrier inside an annulus, and convergence to a solution is proved for that particular evolution.

The contribution of the present paper is therefore dynamical rather than elliptic. We consider the normalized contracting flow \eqref{eq:normflow}, which is a different evolution: it is a contracting flow with the fixed dilational term $\beta X$, and we start it from an arbitrary smooth strictly convex hypersurface enclosing the origin rather than from a barrier. We prove that the solution exists for all time and converges exponentially in $C^{\infty}$ to a stationary solution, and that the stationary solution is unique in the supercritical range (Section \ref{sec:unique}). The analytical core is a principal-radius estimate for arbitrary positive angular data $f$ (Proposition \ref{prop:aniso-c2}), in which the terms involving $\bn f$ and $\bn^2 f$ are absorbed by a combined inverse-concavity--radial quadratic form, together with a residual argument that produces the exponential rate.

We also record the position of the uniqueness statement relative to existing elliptic criteria. Ding and Li proved in \cite[Theorem 6.1]{DL25} a general uniqueness theorem for star-shaped solutions of curvature equations $F(\kappa) = G(X, \nu)$ under the strict radial monotonicity condition $\partial_{\rho}(\rho G) < 0$, where $\rho = \abs{X}$ and the radial direction and the normal are held fixed. For the $k$-th root of the stationary equation \eqref{eq:stationary}, one has $F = \sigma_k^{1/k}$ and $G(X, \nu) = (\beta/f(\nu))^{1/k}\langle X, \nu\rangle^{1/k}\abs{X}^{-\alpha/k}$ on the positive-support domain, and $\partial_{\rho}(\rho G) = -\frac{\alpha-k-1}{k}G$: strict supercriticality is exactly their monotonicity condition (see Remark \ref{rem:dl-uniqueness}). Uniqueness in the supercritical range is therefore consistent with this general criterion, and we include a short support-function proof for completeness. The flow theorems of \cite{DL25} involve additional structural conditions on the data and a sign condition on the initial residual, so they do not yield the convergence statement of Theorem \ref{thm:main} for arbitrary positive $f$ and arbitrary admissible initial hypersurfaces.

\begin{theorem}\label{thm:main}
Let $1 \le k \le n$, $\alpha > k+1$ and $f \in C^{\infty}(\Sn)$ be positive. For every smooth, closed, strictly convex initial hypersurface enclosing the origin, the normalized flow \eqref{eq:normflow} has a unique smooth strictly convex solution $M_t$ for all $t \ge 0$.

Moreover, equation \eqref{eq:stationary} admits a unique smooth positive support function $u_{\infty}$ satisfying
\[
\bn^2 u_{\infty} + u_{\infty}\bg > 0
\]
and the hypersurfaces $M_t$ converge exponentially in $C^{\infty}$ to the stationary hypersurface determined by $u_{\infty}$. More precisely, with
\[
\omega := \beta(\alpha - k - 1) > 0,
\]
one has
\begin{equation}\label{eq:thm-rate}
\norm{u(\cdot, t) - u_{\infty}}_{C^{j}(\Sn)} \le C_j e^{-\omega t}
\qquad \text{for every } j \ge 0 \text{ and all } t \ge 0 .
\end{equation}
Each constant $C_j$ may depend on the initial hypersurface, $n$, $k$, $\alpha$, and $f$, but is independent of $t$.
\end{theorem}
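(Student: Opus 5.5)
We work throughout with the support function $u(x,t)$ in inverse Gauss coordinates, where the flow becomes the scalar parabolic equation
\[
\partial_t u = -f(x)r^{\alpha}\sigma_k(\kappa) + \beta u ,\qquad r^2 = u^2+\abs{\bn u}^2,\quad \sigma_k(\kappa)=\frac{\sigma_{n-k}(b[u])}{\sigma_n(b[u])}.
\]
This equation is strictly parabolic as long as $b[u]>0$, so short-time existence and uniqueness are standard. The strategy is to derive time-independent a priori estimates in the following order: $C^0$, $C^1$, the residual, two-sided curvature bounds, and finally higher regularity and convergence.

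\textbf{$C^0$ and $C^1$.} At a spatial maximum of $u$ we have $r=u$ and $\kappa_i\ge 1/u$, hence $\sigma_k\ge \beta u^{-k}$. Therefore
\[
\frac{d}{dt}u_{\max}\le -(\min f)\,\beta\, u_{\max}^{\alpha-k}+\beta u_{\max}.
\]
Since $\alpha-k>1$, this bounds $u_{\max}$ from above uniformly in time. Symmetrically, at the minimum of $u$ the spatial minimum of the radial function coincides with $u$, so $\sigma_k\le\beta u^{-k}$ and
\[
\frac{d}{dt}u_{\min}\ge \beta u_{\min}\bigl(1-(\max f)u_{\min}^{\alpha-k-1}\bigr),
\]
which gives a positive lower bound. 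For convex bodies containing the origin, the bounds $0<c\le u\le C$ imply $r\le C$ and $\abs{\bn u}\le C$, which is the $C^1$ bound.

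\textbf{Residual.} Set $p=\partial_t\log u=\beta-fr^{\alpha}\sigma_k/u$. Differentiating the equation gives a linear parabolic equation for $p$:
\[
\partial_t p = \cL p + \langle\text{drift},\bn p\rangle - \beta\Bigl(\alpha-k-1+\text{(terms vanishing at extrema)}\Bigr)p\,(\beta-p)\cdots .
\]
The key computation is that at a spatial maximum or minimum of $p$ the zeroth-order coefficient has the sign $-(\alpha-k-1)(\beta-p)$, up to factors. To see this, note that $\partial_t$ of $\log(fr^{\alpha}\sigma_k)$ contributes $\alpha\,\partial_t\log r$ and $-\sigma_k^{ij}\partial_t b_{ij}$ type terms, and that homogeneity yields $\alpha p$ and $-kp$ contributions plus the $\beta$ term. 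From this we obtain the monotonicity of $\norm{p}_{L^\infty}$ and an ODE comparison yielding $\norm{p}_{\infty}\le Ce^{-\omega t}$. This immediately gives two-sided bounds for the speed $fr^\alpha\sigma_k$, since $\beta-p$ stays between positive constants. I will carry out this maximum-principle computation carefully, since the sign bookkeeping is delicate.

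\textbf{Curvature bounds (main obstacle).} Since $\sigma_k$ is bounded on both sides, an upper bound on the principal radii gives a lower bound on the $\kappa_i$, and then the bound on $\sigma_k$ together with the lower bound on each $\kappa_i$ gives upper bounds on the $\kappa_i$. So it suffices to bound the maximal eigenvalue of $b[u]$ from above. Following Li--Sheng--Wang, I apply the maximum principle to
\[
\Theta=\log\lambda_{\max}(b)-A\log u+B\abs{X}^2,
\]
using the inverse-concavity of $\sigma_k(\kappa)^{-1}$ as a function of $b$, that is, the concavity of $(\sigma_n/\sigma_{n-k})(b)$ in the relevant form. The hardest part is the new terms $\bn\log f$ and $\bn^2\log f$ produced when $\bn^2$ falls on $\log f$. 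The linear terms in $\bn_1 b_{11}$ can be handled by Cauchy--Schwarz against the good inverse-concavity quadratic term together with the radial term coming from $B\abs{X}^2$. The $\bn^2\log f$ term is of order $b_{11}$ times a bounded factor, so it is dominated by choosing $A$ large, using that the $-A\log u$ term produces $A\sum\sigma_k^{ii}$-type positivity. If the cross terms resist, I will fall back on working with $\log\sigma_k^{-1}$ and the $r^\alpha$ factor combined into a single quadratic form, and check its positivity by choosing constants in the order $B$, then $A$.

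\textbf{Higher regularity and convergence.} With two-sided curvature bounds the equation is uniformly parabolic and concave after rewriting it in terms of $\log$ of the quotient. Krylov--Safonov and Evans--Krylov estimates, followed by Schauder estimates, give uniform $C^{j}$ bounds, and hence long-time existence. Since $\partial_t\log u=p$ and $\norm{p}_{\infty}\le Ce^{-\omega t}$, $\log u$ converges uniformly at rate $e^{-\omega t}$ to some $u_\infty$. Interpolating with the uniform $C^j$ bounds gives convergence in every $C^j$; to obtain exactly the rate $\omega$ without loss, I would bound the $C^j$ norms of $p$ by $Ce^{-\omega t}$ using parabolic Schauder estimates for its linear equation. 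The limit satisfies $p=0$, which is the stationary equation. Uniqueness comes from comparing two solutions $u_1,u_2$ at the maximum of $u_1/u_2$: the dilation $su_2$ changes the left side of \eqref{eq:stationary} by the factor $s^{\alpha-k}$ and the right side by $s$, and since $\alpha-k-1>0$ this forces $s=1$.
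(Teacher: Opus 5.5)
Your overall architecture matches the paper's:
\begin{itemize}
\item maximum-principle $C^0$/$C^1$ bounds (you work at extrema of $u$ rather than $r$, which is equivalent);
\item the residual $p=\partial_t\log u$, with a logistic comparison giving the rate $\omega$;
\item an inverse-concavity bound for the principal radii;
\item Evans--Krylov and Schauder estimates, with Schauder estimates for the linear equation of $p$ keeping the rate in every $C^j$;
\item comparison at $\max u_1/u_2$ for uniqueness.
\end{itemize}
One small correction in the residual step. The equation is exactly $p_t=a^{ij}p_{ij}-d^ip_i-(\alpha-k-1)(\beta-p)\,p$ with $a^{ij}=-fr^{\alpha}\dot G^{ij}$. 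There is no extra factor $\beta$ and no further zeroth-order term; a factor $\beta$ would change the rate to $\beta^2(\alpha-k-1)$. The $\alpha$ comes from $r_t=(uw+u_iw_i)/r$ with $w=up$ and $r^2=u^2+\abs{\bn u}^2$, and the $k+1$ comes from $\cL u$.

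The genuine gap is in the curvature step. Your main line misplaces the difficulty, and the mechanism you propose could not work.

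\emph{The $f$-terms are not the obstacle.} In inverse Gauss coordinates $f=f(x)$. So in the $S$-normalized evolution of $\log\lambda$, the Hessian of $\gamma=\log f$ enters only as $-\gamma_{11}/\lambda$. The term $\gamma_1(\log r)_1/\lambda$ is bounded because $(\log r)_1=u_1\lambda/r^2$. Only $\gamma_1(\log G)_1/\lambda$ needs absorbing, into $Q_3\ge\frac{k+1}{k\lambda}\big((\log G)_1\big)^2$.

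\emph{Large $A$ could not help anyway.} The trace produced by $-A\log u$ is $A\,\wh a^{pq}\bg_{pq}=A\big(\sigma_1-(k+1)\sigma_{k+1}/\sigma_k\big)$. This is not bounded below by $c\lambda_{\max}$: for $k=1$ it equals $A\abs{\kappa}^2/\sigma_1$, which stays bounded while $\kappa_{\min}\to0$. So no term of order $b_{11}$ could be dominated this way.

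\emph{The real obstruction.} It is present already for constant $f$: the cross term $2\alpha(\log r)_1(\log G)_1/\lambda$ from the product $r^{\alpha}G$. It is linear in $(\log G)_1$ with coefficient $2\alpha u_1/r^2$, so absorbing it into $Q_3$ costs a term of order $\lambda$. The paper keeps it in one quadratic form with $Q_3$ and $\alpha(\alpha-1)(\log r)_1^2/\lambda$. After completing the square, the lower bound for this form is nonnegative exactly when $\alpha\ge k+1$. That is where supercriticality enters the $C^2$ estimate, and your plan never locates it.

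\emph{Where coercivity comes from.} The $-c_*\lambda$ term comes from the $r^{\alpha}$ factor itself, via $-\frac{\alpha}{\lambda}\frac{r_{11}}{r}$ and the radial Hessian identity. Neither $B\abs{X}^2$ nor a large $A$ is needed. With $W=\log\Lambda-\log u$, the first-order condition $b_{11;i}/b_{11}=u_i/u$ cancels the gradient-square terms exactly. Your ``fallback'' is precisely the paper's argument and should be the main line.
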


Theorem \ref{thm:main} contains the following positive-index existence and uniqueness statement for the geometric measure problem.

\begin{corollary}[$q$-dual Christoffel--Minkowski problem]\label{cor:main}
Let $1 \le k < n$, $0 < q < n-k$, and let $\phi \in C^{\infty}(\Sn)$ be positive. Then there exists a unique smooth positive support function $u$ such that
\[
\bn^2 u + u\bg > 0
\]
and
\begin{equation}\label{eq:cor-pde}
(n+1)\phi(x) r^{n+1-q}\sigma_k(\kappa) = u .
\end{equation}
Equivalently, every smooth positive density is the density of $C_{k,q}$ for a unique smooth strictly convex body containing the origin.
\end{corollary}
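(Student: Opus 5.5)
The corollary will be obtained by specializing Theorem \ref{thm:main} to a particular choice of data; no new analysis is required. Given $1 \le k < n$, $0 < q < n-k$ and a positive $\phi \in C^{\infty}(\Sn)$, set
\[
\alpha := n+1-q, \qquad f := \beta(n+1)\phi .
\]
Then $f$ is smooth and positive on $\Sn$. The constraint $q < n-k$ is equivalent to $\alpha > k+1$, so the supercritical hypothesis \eqref{eq:supercritical} holds. With these choices the stationary equation \eqref{eq:stationary}, $f r^{\alpha}\sigma_k(\kappa) = \beta u$, becomes $\beta(n+1)\phi\, r^{n+1-q}\sigma_k(\kappa) = \beta u$. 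Dividing by $\beta>0$ gives exactly \eqref{eq:cor-pde}. This is the identification already noted after \eqref{eq:stationary}.

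For existence and uniqueness, I would apply Theorem \ref{thm:main} with any admissible initial hypersurface, for instance the unit sphere, which is smooth, closed, strictly convex and encloses the origin. The second part of the theorem then yields a smooth positive support function $u_\infty$ with $\bn^2 u_\infty + u_\infty\bg > 0$ that solves \eqref{eq:stationary}, and therefore \eqref{eq:cor-pde}. Uniqueness is part of the same statement: any other smooth positive $u$ with $\bn^2 u + u\bg>0$ solving \eqref{eq:cor-pde} also solves \eqref{eq:stationary} with the same $f$ and $\alpha$, so it coincides with $u_\infty$. In this step $r$ and $\kappa$ are understood as functions of $u$ in inverse Gauss coordinates: $r = (u^2+\abs{\bn u}^2)^{1/2}$, and the $\kappa_i$ are the reciprocals of the eigenvalues of $b[u]$. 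This makes clear that the two equations are literally the same PDE in $u$.

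For the equivalent measure-theoretic formulation, I would use the fact that a smooth positive $u$ with $b[u]>0$ is the support function of a smooth strictly convex body $K$. Positivity of $u$ means $K$ contains the origin in its interior. By the density computation \eqref{eq:Ckq-density}, $dC_{k,q}(K,\cdot) = \phi\,dx$ holds precisely when \eqref{eq:pde} holds, so the existence and uniqueness of $u$ translate into existence and uniqueness of $K$. Uniqueness among bodies holds within the smooth strictly convex category in which $C_{k,q}$ was defined, since a convex body is determined by its support function. There is no real obstacle here; the only points to check are the algebraic match of constants, $f=\beta(n+1)\phi$ and $\alpha=n+1-q$, and that $0<q$ is used only to place the problem in the positive-index range, while the theorem itself needs only $\alpha>k+1$.
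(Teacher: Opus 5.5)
Your proposal is correct and matches the paper's proof: the same substitution $\alpha = n+1-q$, $f = \beta(n+1)\phi$, existence from the flow limit in Theorem \ref{thm:main}, and the measure reformulation via \eqref{eq:Ckq-density}. The only difference is that you take uniqueness from the uniqueness clause of Theorem \ref{thm:main}, whereas the paper cites Proposition \ref{prop:unique} directly, which is where that clause comes from.
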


For convergence, the essential difficulty is exactly the one identified in \cite{LSW20}: for $k < n$ there is no known analogue of the functional used in the Gauss curvature case. We replace it by a pointwise residual. Namely, let
\begin{equation}\label{eq:residual-intro}
p = \frac{u_t}{u} = \partial_t \log u = \beta - \frac{f r^{\alpha}\sigma_k(\kappa)}{u}.
\end{equation}
We prove directly that
\begin{equation}\label{eq:mono-intro}
t \longmapsto \norm{p(\cdot, t)}_{L^{\infty}(\Sn)}
\end{equation}
is nonincreasing whenever $\alpha \ge k+1$. In fact, parabolic comparison with spatial constants gives two-sided time-independent bounds for $f r^{\alpha}\sigma_k/u$ determined by the initial datum alone, so \eqref{eq:mono-intro} decays exponentially whenever $\alpha > k+1$, with the exponent $\beta(\alpha-k-1)$ (see Remark \ref{rem:sharper-rate}). These residual estimates use only the smoothness and strict convexity of the solution together with the time-independence of $f$ in inverse Gauss-map coordinates; they are derived in Section \ref{subsec:residual}, before the curvature estimates. The residual estimate applies throughout every smooth strictly convex existence interval, and combined with the uniform $C^0$ bounds established in Section \ref{sec:c0c1} it yields convergence of the whole flow.

The paper is organized as follows. Section \ref{sec:prelim} records the radial and inverse Gauss-map equations and the ellipticity convention. Section \ref{sec:c0c1} proves the $C^0$ and $C^1$ estimates. In Section \ref{subsec:residual} we derive the evolution of the residual $p$ and obtain the two-sided speed and $\sigma_k$ bounds by comparison on compact time intervals; Section \ref{subsec:c2} proves the $C^2$ estimate. Section \ref{sec:reg} proves higher regularity and long-time existence. Section \ref{sec:conv} contains the exponential convergence argument. Section \ref{sec:unique} proves uniqueness of the stationary solution, discusses its relation to the general criterion of \cite{DL25}, and closes with a remark on the critical exponent $\alpha = k+1$.

\section{Geometric preliminaries and scalar equations}\label{sec:prelim}

Throughout this paper, we always use $\bg$ and $\bn$ to denote the standard metric and covariant derivative on $\Sn$. We also fix the terminology: \emph{strictly convex} means that the second fundamental form is positive definite; equivalently, all principal curvatures are positive.

\subsection{Radial coordinates}
Let $M \subset \R^{n+1}$ be a smooth, closed hypersurface which is star-shaped with respect to the origin. Thus $M$ is represented as the radial graph
\begin{equation}\label{eq:radialgraph}
X(\xi) = r(\xi)\xi, \qquad \xi \in \Sn,
\end{equation}
where $r : \Sn \to \R_{+}$ is the radial function. The following standard radial-graph identities are recorded, in the same static form, in \cite[Section 2]{LSW20}. The induced metric and its inverse are
\begin{equation}\label{eq:metric}
g_{ij} = r^2 \bg_{ij} + r_i r_j,
\qquad
g^{ij} = r^{-2}\Big( \bg^{ij} - \frac{r_i r_j}{r^2 + \abs{\bn r}^2} \Big).
\end{equation}
The outer unit normal and support function are therefore
\begin{equation}\label{eq:normal}
\nu = \frac{r\xi - \bn r}{\sqrt{r^2 + \abs{\bn r}^2}} = \frac{\xi - \bn\vartheta}{\sqrt{1 + \abs{\bn\vartheta}^2}},
\qquad \vartheta = \log r,
\end{equation}
and
\begin{equation}\label{eq:support-radial}
u = \langle X, \nu\rangle = \frac{r^2}{\sqrt{r^2 + \abs{\bn r}^2}} = \frac{r}{\sqrt{1 + \abs{\bn\log r}^2}} .
\end{equation}
With the convention that $h_{ij} = \langle \vec{h}_{ij}, -\nu\rangle$, one has
\begin{equation}\label{eq:hij}
h_{ij} = \frac{r^2\bg_{ij} + 2 r_i r_j - r r_{ij}}{\sqrt{r^2 + \abs{\bn r}^2}} .
\end{equation}
The principal curvatures of $M$ are the eigenvalues of $h_{ij}$ with respect to $g_{ij}$. Equivalently, they are the eigenvalues of the symmetric Weingarten matrix $a = (g^{-1/2})h(g^{-1/2})$. In terms of $\vartheta = \log r$, this matrix can be written as
\begin{equation}\label{eq:weingarten-matrix}
a_{ij} = e^{-\vartheta}\big(1 + \abs{\bn\vartheta}^2\big)^{-1/2} \gamma^{il}\big( \delta_{lm} + \vartheta_l\vartheta_m - \vartheta_{lm} \big)\gamma^{mj},
\end{equation}
where
\[
\gamma^{ij} = \delta_{ij} - \frac{\vartheta_i\vartheta_j}{\sqrt{1 + \abs{\bn\vartheta}^2}\,\big(1 + \sqrt{1 + \abs{\bn\vartheta}^2}\,\big)} .
\]

\subsection{Inverse Gauss map coordinates}
We follow the notation and presentation of \cite[Section 2]{LSW20a}. Let $M \subset \R^{n+1}$ be a smooth, closed, strictly convex hypersurface, parametrized by its inverse Gauss map $X : \Sn \to M$. The support function $u : \Sn \to \R$ of $M$ is defined by
\begin{equation}\label{eq:support-def}
u(x) = \sup\{ \langle x, y\rangle : y \in M \}.
\end{equation}
The supremum is attained at $y = X(x)$, where $x$ is the outer unit normal of $M$ at $y$. It follows that
\begin{equation}\label{eq:position}
X(x) = u(x)x + \bn u(x),
\end{equation}
and hence the radial function is
\begin{equation}\label{eq:r-from-u}
r = \abs{X} = \sqrt{u^2 + \abs{\bn u}^2} .
\end{equation}
Let $W = d\nu : T_{X(x)}M \to T_x\Sn$ be the Weingarten map, where both tangent spaces are identified with $x^{\perp} \subset \R^{n+1}$. Since $\nu \circ X = \mathrm{id}_{\Sn}$, we have $dX_x = W^{-1}$. We lower an index of the inverse Weingarten map with the spherical metric $\bg$ and define the curvature-radius tensor
\begin{equation}\label{eq:radius-tensor}
b_{ij} := \bg_{il}(W^{-1})^{l}{}_{j} = u_{ij} + u\bg_{ij}.
\end{equation}
Indeed, the second equality follows by differentiating \eqref{eq:position}. We arrange the eigenvalues of $b$ with respect to $\bg$ in nondecreasing order and the principal curvatures in nonincreasing order,
\[
\lambda_1 \le \cdots \le \lambda_n,
\qquad
\kappa_1 \ge \cdots \ge \kappa_n .
\]
Since the eigenvalues of $b$ are the principal radii, these ordering conventions give $\lambda_i = \kappa_i^{-1}$ for each $i$, and hence
\begin{equation}\label{eq:G-def}
\sigma_k(\kappa) = \frac{\sigma_{n-k}(\lambda(b))}{\sigma_n(\lambda(b))} =: G(b).
\end{equation}
Then, at a frame diagonalizing $b$,
\begin{equation}\label{eq:Gdot-negative}
\dot{G}^{ii} = -\kappa_i^2\, \sigma_{k-1}(\kappa \,|\, i) < 0 .
\end{equation}
Thus $-\dot{G}^{ij}$ is positive definite on the positive cone.

\subsection{Ellipticity and inverse concavity}
Let
\[
F(h) = \sigma_k(h)^{1/k}.
\]
We use $F^{ij}$ and $F^{ij,rs}$ for its first and second derivatives. The following inverse-concavity inequality is standard; see, for example, \cite{And94, LSW20}; the general properties of the elementary symmetric functions of the principal curvatures used throughout are developed in \cite{CNS85}.

\begin{lemma}\label{lem:invconc}
If $h > 0$, $\td{h} = h^{-1}$, and $\eta$ is symmetric, then
\begin{equation}\label{eq:invconc}
\big( F^{ij,rs} + 2 F^{ir}\td{h}^{js} \big)\eta_{ij}\eta_{rs} \ge 2F^{-1}\big( F^{ij}\eta_{ij} \big)^2 .
\end{equation}
\end{lemma}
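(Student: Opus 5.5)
We reduce the inequality to the diagonal case and then to a statement about eigenvalues. Since both sides of \eqref{eq:invconc} are invariant under orthogonal changes of frame, we may assume $h = \mathrm{diag}(h_1,\dots,h_n)$ with $h_i > 0$. Then $\td{h}=\mathrm{diag}(h_1^{-1},\dots,h_n^{-1})$ and $F^{ij}=f_i\delta_{ij}$, where $f(h)=\sigma_k(h)^{1/k}$ and $f_i=\partial f/\partial h_i$. The standard formula for second derivatives of a symmetric spectral function (Andrews, Gerhardt) gives
\[
F^{ij,rs}\eta_{ij}\eta_{rs} = \sum_{i,j} f_{ij}\eta_{ii}\eta_{jj} + \sum_{i\ne j}\frac{f_i-f_j}{h_i-h_j}\eta_{ij}^2 .
\]
Here the difference quotient is read as a limit when $h_i=h_j$. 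In the same frame,
\[
2F^{ir}\td{h}^{js}\eta_{ij}\eta_{rs}=2\sum_{i,j} f_i h_j^{-1}\eta_{ij}^2 .
\]
The left side of \eqref{eq:invconc} therefore splits into a diagonal part and an off-diagonal part. The right side involves only the diagonal entries $\eta_{ii}$.

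\emph{Off-diagonal part.} For $i\ne j$, combine the $(i,j)$ and $(j,i)$ terms. The coefficient of $\eta_{ij}^2$ is
\[
\frac{f_i-f_j}{h_i-h_j}+\frac{f_i}{h_j}+\frac{f_j}{h_i}.
\]
We need this to be nonnegative. We would write it as $\frac{f_ih_i-f_jh_j}{h_i-h_j}+\frac{(f_i+f_j)}{\,}\cdots$, but a simpler route is to multiply by $h_ih_j(h_i-h_j)$ for $h_i>h_j$. This reduces the claim to
\[
(f_ih_i-f_jh_j)\,(h_i+h_j)\ \ge\ (f_i-f_j)\,h_ih_j\cdot 0 ,
\]
and what remains to check is $f_ih_i^2\ge f_jh_j^2$ when $h_i\ge h_j$. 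For $\sigma_k$ this holds because $h_i^2\partial_i\sigma_k-h_j^2\partial_j\sigma_k=(h_i-h_j)\big[(h_i+h_j)\sigma_{k-1}(h|ij)+h_ih_j\sigma_{k-2}(h|ij)\big]\ge 0$. Passing to the $1/k$ power only multiplies by a positive common factor. The off-diagonal contribution is therefore nonnegative and can be discarded.

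\emph{Diagonal part.} We must show
\[
\sum f_{ij}\xi_i\xi_j+2\sum f_ih_i^{-1}\xi_i^2\ \ge\ 2f^{-1}\Big(\sum f_i\xi_i\Big)^2 .
\]
This is exactly the statement that $\tilde f(\lambda):=f(\lambda^{-1})^{-1}$ is concave, i.e.\ that $f$ is inverse concave. To verify it, compute the Hessian of $\tilde f$ at $\lambda=h^{-1}$ and substitute $\xi_i=-h_i^2\zeta_i$. Concavity of $\tilde f$ is then literally the displayed inequality. Concavity of $\tilde f$ holds because
\[
\tilde f(\lambda)=\Big(\frac{\sigma_n(\lambda)}{\sigma_{n-k}(\lambda)}\Big)^{1/k},
\]
and this $k$-th root of a Hessian quotient is concave on the positive cone. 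That concavity is a classical fact (Huisken--Sinestrari, Andrews), which we would cite.

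The main obstacle is keeping signs and factors straight in the off-diagonal combination, including the degenerate case $h_i=h_j$. That case follows by continuity, since the coefficient becomes $f_{ii}-f_{ij}+2f_i/h_i$, which is the limit of the expression above.
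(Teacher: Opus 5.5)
Your argument is correct and follows essentially the paper's route. The paper simply cites the standard inverse-concavity result and notes that it is equivalent to concavity of $G^{-1/k}=(\sigma_n/\sigma_{n-k})^{1/k}$ on the positive cone; that is exactly the classical fact your diagonal reduction lands on, and your explicit identity for $h_i^2\partial_i\sigma_k-h_j^2\partial_j\sigma_k$ supplies the off-diagonal check the paper leaves to the references.

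One cleanup is needed. Multiplying the off-diagonal coefficient by $h_ih_j(h_i-h_j)$ gives exactly $f_ih_i^2-f_jh_j^2$, so replace the broken fragment and the garbled intermediate display with that identity.
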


\noindent Equivalently, with $G$ as in \eqref{eq:G-def}, the function
\begin{equation}\label{eq:tildeF}
\td{F}(b) := G(b)^{-1/k} = \Big( \frac{\sigma_n(b)}{\sigma_{n-k}(b)} \Big)^{1/k}
\end{equation}
is concave on the positive cone, and $F(h) = \td{F}(h^{-1})^{-1}$.

\subsection{Normalization of the flow}
We record the rescaling that transforms the unnormalized flow \eqref{eq:flow} into \eqref{eq:normflow}. Write $s$ for the unnormalized time and set
\begin{equation}\label{eq:rescaling}
X(\cdot, s) = \lambda(s)\, \td{X}(\cdot, \tau(s)),
\qquad
\lambda(0) = 1,\quad \tau(0) = 0.
\end{equation}
A dilation does not change the unit normal, while the radial function and the principal curvatures scale according to
\begin{equation}\label{eq:scaling}
r[X] = \lambda \td{r},
\qquad
\kappa_i[X] = \lambda^{-1}\td{\kappa}_i,
\qquad
\sigma_k(\kappa[X]) = \lambda^{-k}\sigma_k(\td{\kappa}).
\end{equation}
Differentiating \eqref{eq:rescaling} with respect to $s$ and using \eqref{eq:flow} and \eqref{eq:scaling}, we obtain
\[
\lambda' \td{X} + \lambda \tau' \partial_{\tau}\td{X} = -\lambda^{\alpha - k} f(\td{\nu})\, \td{r}^{\,\alpha}\sigma_k(\td{\kappa})\, \td{\nu}.
\]
Hence
\begin{equation}\label{eq:tauflow}
\partial_{\tau}\td{X} = -\frac{\lambda^{\alpha-k-1}}{\tau'}\, f(\td{\nu})\, \td{r}^{\,\alpha}\sigma_k(\td{\kappa})\, \td{\nu} - \frac{\lambda'}{\lambda\tau'}\, \td{X}.
\end{equation}
Choose the new time and the dilation factor by
\begin{equation}\label{eq:timechoice}
\tau' = \lambda^{\alpha-k-1},
\qquad
\lambda' = -\beta \lambda^{\alpha-k}.
\end{equation}
Then \eqref{eq:tauflow} becomes
\[
\partial_{\tau}\td{X} = -f(\td{\nu})\, \td{r}^{\,\alpha}\sigma_k(\td{\kappa})\, \td{\nu} + \beta \td{X},
\]
which is precisely the normalized flow \eqref{eq:normflow}.

For completeness, put $m = \alpha - k - 1 > 0$. Solving \eqref{eq:timechoice} gives
\begin{equation}\label{eq:lambda-tau}
\lambda(s) = (1 + m\beta s)^{-1/m},
\qquad
\tau(s) = \frac{1}{m\beta}\log(1 + m\beta s).
\end{equation}
Equivalently,
\[
s(\tau) = \frac{e^{m\beta\tau} - 1}{m\beta},
\qquad
\td{X}(\cdot, \tau) = e^{\beta\tau} X(\cdot, s(\tau)).
\]
In particular, infinite unnormalized time corresponds to infinite normalized time. From now on we drop the tildes and again write $t$ for the normalized time. The term $\beta X$ has normal component $\beta u\nu$; its tangential component only changes the parametrization. Combining the normal velocity with the static identities in the preceding subsections gives the radial equation
\begin{equation}\label{eq:radial-eq}
r_t = -\sqrt{1 + \abs{\bn\log r}^2}\, f(\nu) r^{\alpha}\sigma_k(\kappa) + \beta r
\end{equation}
and, in inverse Gauss-map coordinates, the support-function equation
\begin{equation}\label{eq:support-eq}
u_t = -f(x)r^{\alpha}G(b) + \beta u =: -S + \beta u .
\end{equation}
Here $f = f(x)$ is independent of time in inverse Gauss-map coordinates. This observation will be important in the residual and speed estimates. In this paper we take
\[
\beta = c_k = \sigma_k(1, \dots, 1) = \binom{n}{k}.
\]

\section{Uniform bounds for the radial and support functions}\label{sec:c0c1}

Put
\begin{equation}\label{eq:data}
c_k = \binom{n}{k},
\qquad
m = \alpha - k - 1 > 0,
\qquad
f_{-} = \min_{\Sn} f,
\qquad
f_{+} = \max_{\Sn} f .
\end{equation}

\begin{lemma}\label{lem:r-bound}
There are positive constants $r_{-}$ and $r_{+}$, depending only on the initial radial function and the fixed data in \eqref{eq:data}, such that
\begin{equation}\label{eq:r-bound}
r_{-} \le r(\xi, t) \le r_{+}
\end{equation}
for every $\xi \in \Sn$ and every time for which a smooth strictly convex solution exists.
\end{lemma}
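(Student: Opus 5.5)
We prove \eqref{eq:r-bound} by the maximum principle applied to the radial equation \eqref{eq:radial-eq}, using spatially constant comparison values. Set $r_{\max}(t) = \max_{\Sn} r(\cdot,t)$. At a point $\xi_0$ where $r(\cdot,t)$ attains its maximum we have $\bn r = 0$, so $\sqrt{1+\abs{\bn\log r}^2} = 1$ and $\nu = \xi_0$. Moreover $\bn^2 r \le 0$, so by \eqref{eq:hij} $h_{ij} \ge r\bg_{ij}$, while $g_{ij} = r^2\bg_{ij}$ by \eqref{eq:metric}. Hence every principal curvature satisfies $\kappa_i \ge 1/r$, and therefore $\sigma_k(\kappa) \ge c_k r^{-k}$. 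Substituting into \eqref{eq:radial-eq}, and using Hamilton's trick for the Lipschitz function $r_{\max}$, gives
\[
\frac{d}{dt} r_{\max} \le -f_{-} c_k\, r_{\max}^{\alpha-k} + c_k\, r_{\max}
= c_k\, r_{\max}\big(1 - f_{-} r_{\max}^{m}\big).
\]
The right-hand side is negative as soon as $r_{\max} > f_{-}^{-1/m}$, where $m = \alpha-k-1 > 0$. Consequently
\[
r \le r_{+} := \max\big\{ \max_{\Sn} r(\cdot,0),\ f_{-}^{-1/m} \big\}.
\]

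The lower bound is symmetric. At a minimum point of $r(\cdot,t)$ we again have $\bn r = 0$ and $\nu = \xi$, but now $\bn^2 r \ge 0$. This gives $h_{ij} \le r\bg_{ij}$, so $\kappa_i \le 1/r$, and since all $\kappa_i > 0$ by strict convexity, $\sigma_k(\kappa) \le c_k r^{-k}$. Thus
\[
\frac{d}{dt} r_{\min} \ge c_k\, r_{\min}\big(1 - f_{+} r_{\min}^{m}\big),
\]
which is positive when $r_{\min} < f_{+}^{-1/m}$. This yields
\[
r \ge r_{-} := \min\big\{ \min_{\Sn} r(\cdot,0),\ f_{+}^{-1/m} \big\} > 0.
\]

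The only point requiring care is the signs. We must check the curvature convention $h_{ij} = \langle \vec h_{ij}, -\nu\rangle$, so that the sphere of radius $r$ has $h_{ij} = r\bg_{ij}$, positive. We must also check that the comparison $\kappa_i \gtrless 1/r$ follows from comparing $h$ with $g/r$, i.e.\ that $h_{ij} - r^{-1}g_{ij} = -r\,r_{ij}/r = -r_{ij}$ at a critical point, up to the positive factor. Finally, the direction of monotonicity of $\sigma_k$ on the positive cone is what makes the curvature comparison pass to $\sigma_k$, and it is here that strict convexity is used. The supercriticality $m > 0$ is exactly what makes the barrier constants $f_{\mp}^{-1/m}$ attracting. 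With these checked, the bounds hold on every interval of smooth strictly convex existence and depend only on $r(\cdot,0)$, $f_{\pm}$, $k$, $n$ and $\alpha$.
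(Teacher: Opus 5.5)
Your proposal is correct and follows essentially the same route as the paper. Like the paper, you evaluate the radial equation \eqref{eq:radial-eq} at spatial extrema of $r$, use $\bn r = 0$ and the sign of $\bn^2 r$ to bound $\sigma_k(\kappa)$ by $c_k r^{-k}$ from the appropriate side, and compare the resulting differential inequalities with the scalar ODE; since $\beta = c_k$, your barrier constants $f_{\mp}^{-1/m}$ coincide with the paper's $(\beta/(c_k f_{\mp}))^{1/m}$, and your check of the curvature comparison $h_{ij} - r^{-1}g_{ij} = -r_{ij}$ supplies a step the paper leaves implicit.
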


\begin{proof}
At a spatial minimum of $r$, one has $\bn r = 0$, $\bn^2 r \ge 0$, and hence
\[
\sigma_k(\kappa) \le c_k r^{-k}.
\]
Using \eqref{eq:radial-eq},
\begin{equation}\label{eq:rmin-ode}
\frac{d}{dt} r_{\min} \ge r_{\min}\big( \beta - c_k f_{+} r_{\min}^{m} \big).
\end{equation}
Similarly, at a spatial maximum,
\begin{equation}\label{eq:rmax-ode}
\frac{d}{dt} r_{\max} \le r_{\max}\big( \beta - c_k f_{-} r_{\max}^{m} \big).
\end{equation}
The differential inequalities for $r_{\min}$ and $r_{\max}$ are understood almost everywhere, or equivalently in the usual barrier sense. Since $m > 0$, comparison with the scalar ODEs gives
\begin{equation}\label{eq:rmin}
r_{\min}(t) \ge \min\Big\{ r_{\min}(0), \Big( \frac{\beta}{c_k f_{+}} \Big)^{1/m} \Big\},
\end{equation}
\begin{equation}\label{eq:rmax}
r_{\max}(t) \le \max\Big\{ r_{\max}(0), \Big( \frac{\beta}{c_k f_{-}} \Big)^{1/m} \Big\}.
\end{equation}
\end{proof}

\begin{lemma}\label{lem:u-bound}
Under the hypotheses of Lemma \ref{lem:r-bound},
\begin{equation}\label{eq:u-bound}
C^{-1} \le u \le C,
\qquad
\abs{\bn u} + \abs{\bn r} \le C,
\qquad
\langle \xi, \nu\rangle \ge C^{-1}.
\end{equation}
\end{lemma}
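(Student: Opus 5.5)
The bounds follow from Lemma \ref{lem:r-bound} together with two elementary facts about convex bodies containing the origin. In short: the gradient bound comes from $r^2 = u^2 + \abs{\bn u}^2$, and the lower bound on $u$ comes from comparing the body with an inscribed ball. Throughout, $C$ depends only on $r_{\pm}$, and hence only on the initial datum and the quantities in \eqref{eq:data}.

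\emph{Upper bounds.} By \eqref{eq:r-from-u}, $u^2 + \abs{\bn u}^2 = r^2$, where $r$ is evaluated at the point $X(x)$ and so takes values in $[r_-, r_+]$. This gives at once $u \le r_+$ and $\abs{\bn u} \le r_+$.

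\emph{Lower bound on $u$.} Let $K_t$ be the convex body enclosed by $M_t$. Every $\xi \in \Sn$ satisfies $r(\xi,t)\xi \in K_t$, and $r \ge r_-$ everywhere. Since $K_t$ is convex and contains the origin, it contains the whole segment from $0$ to $r(\xi,t)\xi$, and therefore contains the ball $B_{r_-}(0)$. By the definition \eqref{eq:support-def}, $u(x) \ge \sup_{y \in B_{r_-}}\langle x, y\rangle = r_-$. This establishes $C^{-1} \le u \le C$.

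\emph{Angle and radial gradient.} From \eqref{eq:support-radial},
\[
\langle \xi, \nu\rangle = \frac{u}{r} = \big(1 + \abs{\bn\log r}^2\big)^{-1/2}.
\]
Hence $\langle \xi,\nu\rangle \ge r_-/r_+$. Rearranging, $\abs{\bn \log r}^2 = r^2/u^2 - 1 \le r_+^2/r_-^2$, so $\abs{\bn r} \le r_+^2/r_-$.

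No step here is a real obstacle. The only point requiring care is that the argument applies to $M_t$ at each time of the existence interval, which is exactly where Lemma \ref{lem:r-bound} holds, and that every constant above is built from $r_{\pm}$ alone and so is independent of $t$.
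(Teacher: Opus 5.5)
Your proof is correct and is essentially the paper's argument. The paper obtains $C^{-1}\le u\le C$ from the identities $\min_{\Sn} u=\min_{\Sn} r$ and $\max_{\Sn} u=\max_{\Sn} r$, whose relevant halves you verify directly via the inscribed ball $B_{r_-}(0)$ and $u^2+\abs{\bn u}^2=r^2$; it bounds $\abs{\bn u}\le r_+$ from $X=ux+\bn u$, and reads off $\langle\xi,\nu\rangle\ge r_-/r_+$ and $\abs{\bn r}\le r_+^2/r_-$ from $u/r=\langle\xi,\nu\rangle=r/\sqrt{r^2+\abs{\bn r}^2}$, exactly as you do.
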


\begin{proof}
For every convex body containing the origin,
\begin{equation}\label{eq:minmax-ur}
\min_{\Sn} u = \min_{\Sn} r,
\qquad
\max_{\Sn} u = \max_{\Sn} r .
\end{equation}
The identity \eqref{eq:position} gives
\[
\abs{\bn u} \le r_{+}.
\]
Moreover,
\[
\frac{u}{r} = \langle \xi, \nu\rangle = \frac{r}{\sqrt{r^2 + \abs{\bn r}^2}},
\]
and therefore
\[
\abs{\bn r} \le \frac{r_{+}^2}{r_{-}},
\qquad
\langle \xi, \nu\rangle \ge \frac{r_{-}}{r_{+}} .
\]
\end{proof}

\begin{remark}\label{rem:angle}
The gradients in \eqref{eq:u-bound} live in different spherical coordinates: $\bn r$ is taken with respect to the radial direction $\xi$, while $\bn u$ is taken with respect to the normal $x$. The angle estimate $\langle\xi, \nu\rangle \ge C^{-1}$ says that the radial parametrization remains uniformly transverse: the hypersurface stays a uniformly star-shaped radial graph, and the support function controls the radial function and its gradient. These first-derivative bounds use no curvature estimate. We note for precision that they do not by themselves control the differential of the radial Gauss map: in inverse Gauss coordinates $\xi(x) = X(x)/r(x)$, and differentiation gives
\[
D\xi = \frac{1}{r}\big( I - \xi \otimes \xi \big) b ,
\]
whose norm depends on the principal radii. Those eigenvalues are estimated only in Section \ref{subsec:c2}.
\end{remark}

\section{Curvature estimates}\label{sec:curvature}

We first derive the evolution equation of the relative residual and deduce the two-sided bounds for the contracting speed factor
\[
S = f(x) r^{\alpha}\sigma_k(\kappa)
\]
by parabolic comparison, independently of any curvature estimate. We then prove the principal-radius estimate. Throughout this section the constants may depend on the bounds in Lemmas \ref{lem:r-bound} and \ref{lem:u-bound}.

\subsection{The relative residual and the two-sided speed bound}\label{subsec:residual}

Recall the support-function equation \eqref{eq:support-eq} in inverse Gauss-map coordinates, with $G(b)$ defined in \eqref{eq:G-def}, and define the parabolic operator on $\Sn \times [0, T)$ by
\begin{equation}\label{eq:Lop}
\cL := \partial_t + f r^{\alpha} \dot{G}^{ij}\, \bn_i\bn_j ,
\end{equation}
where $\dot{G}^{ij} = \partial G/\partial b_{ij}$, so that the matrix $\big( \dot{G}^{ij} \big)$ is negative definite by \eqref{eq:Gdot-negative}, and $[0,T)$ is any interval on which the solution remains smooth and strictly convex.

\begin{lemma}\label{lem:residual}
The relative residual
\[
p := \partial_t \log u = \frac{u_t}{u} = \beta - \frac{S}{u}
\]
satisfies
\begin{equation}\label{eq:residual-eq}
\cL p = -\Big( \alpha f r^{\alpha-2}G\, u_i + 2 f r^{\alpha} \dot{G}^{ij}\frac{u_j}{u} \Big) p_i - (\alpha - k - 1)\frac{f r^{\alpha}G}{u}\, p .
\end{equation}
\end{lemma}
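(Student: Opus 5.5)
The identity is a direct computation in inverse Gauss-map coordinates, and the plan is to differentiate the support-function equation \eqref{eq:support-eq} in time. Write $\Phi := S/u = f r^{\alpha} G(b)/u$, so that $p = \beta - \Phi$ and $u_t = pu$. Since $f = f(x)$ is independent of $t$ in these coordinates, we get
\[
\partial_t \log \Phi = \alpha\,\partial_t \log r + \frac{\dot G^{ij}\partial_t b_{ij}}{G} - \frac{u_t}{u}.
\]
Each term is then expressed through $p$. By \eqref{eq:r-from-u}, $r^2 = u^2 + \abs{\bn u}^2$, hence $r r_t = u u_t + u_i (u_t)_i$. Substituting $u_t = pu$ and $(u_t)_i = p_i u + p u_i$ gives
\[
r r_t = p r^2 + u u_i p_i, \qquad \partial_t\log r = p + \frac{u u_i p_i}{r^2}.
\]
The last term is $u_t/u = p$.

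The main step is the curvature term. From $b_{ij} = u_{ij} + u\bg_{ij}$ we have
\[
\partial_t b_{ij} = \bn_{ij}(pu) + pu\,\bg_{ij} = u\,p_{ij} + p_i u_j + p_j u_i + p\,b_{ij}.
\]
Now contract with $\dot G^{ij}$. Since $G$ is homogeneous of degree $-k$ in $b$, Euler's relation gives $\dot G^{ij} b_{ij} = -kG$, and symmetry of $\dot G$ gives
\[
\dot G^{ij}\partial_t b_{ij} = u\,\dot G^{ij}p_{ij} + 2\dot G^{ij}u_j p_i - kGp.
\]
Putting the three pieces together,
\[
\partial_t\log\Phi = (\alpha-k-1)p + \frac{\alpha u u_i p_i}{r^2} + \frac{u\dot G^{ij}p_{ij} + 2\dot G^{ij}u_jp_i}{G}.
\]

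Next use $p_t = -\Phi\,\partial_t\log\Phi$ and multiply through by $\Phi = f r^{\alpha}G/u$. The Hessian term becomes $-f r^{\alpha}\dot G^{ij}p_{ij}$; moving it to the left side yields exactly $\cL p$ as defined in \eqref{eq:Lop}. For the gradient terms:
\[
\Phi\cdot\frac{\alpha u u_i}{r^2} = \alpha f r^{\alpha-2}G\,u_i, \qquad \Phi\cdot\frac{2\dot G^{ij}u_j}{G} = 2 f r^{\alpha}\dot G^{ij}\frac{u_j}{u}.
\]
The zero-order term is $-(\alpha-k-1)\Phi\,p$. Together these reproduce \eqref{eq:residual-eq}.

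The only delicate point is the exact cancellation in the zero-order terms. The $+\alpha p$ from $\log r^{\alpha}$, the $-kp$ from homogeneity, and the $-p$ from dividing by $u$ must combine to exactly $(\alpha-k-1)p$. It is worth double-checking the sign conventions for $\dot G$ and the symmetrization of the mixed term $p_iu_j$ here, since these fix both the form of the drift and the sign of the reaction coefficient used later.
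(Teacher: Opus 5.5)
Your computation is correct and is essentially the paper's: both differentiate the support-function equation in time using $r r_t = u u_t + u_i (u_t)_i$, $(b_{ij})_t = (u_t)_{ij} + u_t\bg_{ij}$, Euler's identity $\dot G^{ij}b_{ij} = -kG$, and the time-independence of $f$. The only difference is bookkeeping: the paper computes $\cL w$ for $w = u_t$ and $\cL u$ separately and then applies the quotient rule to $p = w/u$, whereas you log-differentiate $S/u$ after substituting $u_t = pu$, which produces the coefficient $\alpha - k - 1$ in a single line.
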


\begin{proof}
Denote $w = u_t$. Since
\begin{equation}\label{eq:rt-bt}
r_t = \frac{uw + u_i w_i}{r},
\qquad
(b_{ij})_t = w_{ij} + w \bg_{ij},
\end{equation}
differentiating \eqref{eq:support-eq} in time, and using that $f = f(x)$ is independent of time in inverse Gauss-map coordinates, gives
\begin{align}
w_t &= -\alpha f r^{\alpha-2}(uw + u_i w_i)G - f r^{\alpha}\dot{G}^{ij}(b_{ij})_t + \beta w \notag \\
&= -f r^{\alpha}\dot{G}^{ij}w_{ij} - \alpha f r^{\alpha-2}G u_i w_i - f r^{\alpha}\dot{G}^{ij}\bg_{ij} w - \alpha f r^{\alpha-2}u G w + \beta w . \label{eq:wt}
\end{align}
It follows from \eqref{eq:wt} that
\begin{equation}\label{eq:Lw}
\cL w = -\alpha f r^{\alpha-2}G u_i w_i - f r^{\alpha}\dot{G}^{ij}\bg_{ij} w - \alpha f r^{\alpha-2}u G w + \beta w .
\end{equation}
Using \eqref{eq:support-eq}, the identity $\bn_i\bn_j u = b_{ij} - u\bg_{ij}$, and the $(-k)$-homogeneity of $G$, we have
\begin{equation}\label{eq:Lu}
\cL u = -(k+1) f r^{\alpha}G - f r^{\alpha} u\, \dot{G}^{ij}\bg_{ij} + \beta u .
\end{equation}
Using \eqref{eq:Lw} and \eqref{eq:Lu}, we have
\begin{align}
\cL p &= \frac{\cL w}{u} - \frac{p\, \cL u}{u} - 2 f r^{\alpha}\dot{G}^{ij}\, \bn_i\log u\, \bn_j p \notag \\
&= -\alpha f r^{\alpha-2}G \frac{u_i}{u} w_i - \alpha f r^{\alpha-2}G u p \notag \\
&\qquad + (k+1)\frac{f r^{\alpha}G}{u} p \notag \\
&\qquad - 2 f r^{\alpha}\dot{G}^{ij}\frac{u_j}{u}\, \bn_i p . \label{eq:LQ-expand}
\end{align}
Since $w_i = u_i p + u p_i$, the first two terms of \eqref{eq:LQ-expand} combine according to
\begin{align*}
-\alpha f r^{\alpha-2}G \frac{u_i}{u}\big( u_i p + u p_i \big) &- \alpha f r^{\alpha-2}G u p \\
&= -\alpha f r^{\alpha-2}G u_i p_i - \alpha f r^{\alpha-2}G\, \frac{\abs{\bn u}^2 + u^2}{u}\, p \\
&= -\alpha f r^{\alpha-2}G u_i p_i - \alpha\, \frac{f r^{\alpha}G}{u}\, p ,
\end{align*}
using $r^2 = u^2 + \abs{\bn u}^2$ in the last step. It follows that
\begin{equation*}
\cL p = -\Big( \alpha f r^{\alpha-2}G\, u_i + 2 f r^{\alpha}\dot{G}^{ij}\frac{u_j}{u} \Big) p_i - (\alpha - k - 1)\frac{f r^{\alpha}G}{u}\, p .
\qedhere
\end{equation*}
\end{proof}

The following comparison principle is standard; we record it here for the reader's convenience.

\begin{lemma}[Comparison principle {\cite[Chapter II, Corollary 2.5]{Lie96}}]\label{lem:comparison}
Let $T > 0$, and consider the parabolic operator
\[
\cC[\phi] = \phi_t - a^{ij}\bn_i\bn_j\phi + d^i\bn_i\phi + c\phi
\qquad \text{for } \phi \in C^{\infty}(\Sn \times [0, T]),
\]
where $a^{ij}$ is a positive definite contravariant $2$-tensor field, $d = d^i\partial_i$ is a vector field, and $c \ge 0$ is a smooth function. If $\phi, \psi \in C^{\infty}(\Sn \times [0, T])$ satisfy
\[
\cC[\phi] \le \cC[\psi],
\qquad
\phi(\cdot, 0) \le \psi(\cdot, 0),
\]
then $\phi \le \psi$ on $\Sn \times [0, T]$.
\end{lemma}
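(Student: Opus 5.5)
The plan is the standard weak maximum principle argument applied to the difference $w := \phi - \psi$. By linearity, $w$ satisfies $\cC[w] \le 0$ on $\Sn \times [0,T]$ and $w(\cdot,0) \le 0$, and the goal is $w \le 0$ everywhere. Since $c \ge 0$ is assumed but not strictly positive, I will first make the zeroth-order coefficient strictly positive by an exponential twist. For fixed $\varepsilon > 0$, set
\[
v := e^{-t} w - \varepsilon(1+t).
\]
A direct computation gives
\[
\cC[v] = e^{-t}\cC[w] - e^{-t}w - \varepsilon - c\,\varepsilon(1+t).
\]
This expression can be rewritten in terms of $v$ itself. The cleaner route is to replace $\cC$ by $\cC' := \cC + 1$ acting on $v$. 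From $w_t = e^{t}(v_t + v + \varepsilon(1+t) + \varepsilon)$ one checks that
\[
\cC'[v] = e^{-t}\cC[w] - \varepsilon - c\,\varepsilon(1+t) - \varepsilon(1+t) < 0 .
\]
Moreover $v(\cdot,0) \le -\varepsilon < 0$.

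Next I argue by contradiction. Suppose $v$ is positive somewhere. Since $\Sn \times [0,T]$ is compact and $v$ is smooth, there is a first time $t_0 > 0$ at which $\max_{\Sn} v(\cdot,t_0) = 0$, attained at some point $x_0$. At $(x_0,t_0)$ the following hold:
\begin{itemize}
\item $\bn v = 0$;
\item $\bn^2 v \le 0$, so $a^{ij}\bn_i\bn_j v \le 0$ by the positive definiteness of $a^{ij}$;
\item $v_t \ge 0$, since $v(x_0,t) < 0$ for $t < t_0$;
\item $v = 0$.
\end{itemize}
These give $\cC'[v](x_0,t_0) = v_t - a^{ij}\bn_i\bn_j v + d^i\bn_i v + (c+1)v \ge 0$, which contradicts $\cC'[v] < 0$. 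Hence $v \le 0$, that is, $w \le \varepsilon(1+t)e^{t} \le \varepsilon(1+T)e^{T}$. Letting $\varepsilon \to 0$ gives $w \le 0$.

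No step is a genuine obstacle, since the sphere is closed and there are no boundary terms. The one point needing care is the perturbation: the strict inequality at the first touching point must survive when $c$ vanishes somewhere. The $\varepsilon(1+t)$ term together with the shift $\cC \to \cC + 1$ handles this. Alternatively, one can simply cite \cite[Chapter II, Corollary 2.5]{Lie96}, noting that on a compact manifold without boundary the parabolic boundary reduces to the initial slice.
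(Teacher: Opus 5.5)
Your argument is correct. The paper gives no proof of its own and simply cites Lieberman's Chapter II, Corollary 2.5. Your first-touching-time maximum-principle argument with an $\varepsilon$-perturbation is the standard proof behind that corollary, and you correctly note that on the closed manifold $\Sn$ the parabolic boundary reduces to the initial slice.

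As a minor simplification, the factor $e^{-t}$ and the shift $\cC \to \cC + 1$ are unnecessary. Since $c \ge 0$, already $v = w - \varepsilon(1+t)$ satisfies $\cC[v] \le -\varepsilon < 0$, and the zero-order term vanishes at the touching point, where $v = 0$.
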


We now derive the two-sided speed bound, the $L^{\infty}$ monotonicity, and the exponential estimate directly from \eqref{eq:residual-eq} and the comparison principle. The argument uses only the smoothness and strict convexity of the solution; in particular, no curvature estimate enters.

\begin{proposition}\label{prop:residual-bounds}
Let $\alpha \ge k+1$ and let $[0, T]$ be any compact subinterval of the smooth, strictly convex existence interval. Then:
\begin{enumerate}
\item[(i)] For every $t \in [0, T]$,
\begin{equation}\label{eq:p-bound}
p_0^{-} \le p(\cdot, t) \le p_0^{+},
\end{equation}
where $p_0^{+} := \max\big\{ 0, \max_{\Sn} p(\cdot, 0) \big\}$ and $p_0^{-} := \min\big\{ 0, \min_{\Sn} p(\cdot, 0) \big\}$.
In particular, $t \mapsto E(t) := \norm{p(\cdot, t)}_{L^{\infty}(\Sn)}$ is nonincreasing on the existence interval.
\item[(ii)] With
\begin{equation}\label{eq:hpm}
h_{-} := \min\Big\{ \beta, \min_{\Sn} \frac{S(\cdot, 0)}{u(\cdot, 0)} \Big\},
\qquad
h_{+} := \max\Big\{ \beta, \max_{\Sn} \frac{S(\cdot, 0)}{u(\cdot, 0)} \Big\},
\end{equation}
one has
\begin{equation}\label{eq:Su-bound}
0 < h_{-} \le \frac{S(\cdot, t)}{u(\cdot, t)} \le h_{+}
\qquad \text{for all } t \in [0, T].
\end{equation}
\item[(iii)] If $\alpha > k+1$, then
\begin{equation}\label{eq:c0}
c(x, t) := (\alpha - k - 1)\frac{S}{u} \ge c_0 := (\alpha - k - 1) h_{-} > 0,
\end{equation}
and
\begin{equation}\label{eq:exp-decay}
E(t) \le e^{-c_0 t} E(0)
\qquad \text{for all } t \in [0, T].
\end{equation}
\end{enumerate}
\end{proposition}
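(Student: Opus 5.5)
The plan is to apply the comparison principle, Lemma \ref{lem:comparison}, to the linear equation \eqref{eq:residual-eq} for $p$ from Lemma \ref{lem:residual}, with spatially constant comparison functions. First we put \eqref{eq:residual-eq} in the form $\cC[p] = 0$, where
\[
a^{ij} = -f r^{\alpha}\dot{G}^{ij}, \qquad d^i = \alpha f r^{\alpha-2}G\,u_i + 2 f r^{\alpha}\dot{G}^{ij}\frac{u_j}{u}, \qquad c = (\alpha-k-1)\frac{S}{u}.
\]
Here $a^{ij}$ is positive definite by \eqref{eq:Gdot-negative}. Since $S>0$ and $u>0$, we get $c \ge 0$ exactly when $\alpha \ge k+1$. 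All coefficients are smooth on $\Sn\times[0,T]$ because the solution is smooth and strictly convex there. The comparison principle therefore applies on every compact subinterval.

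For (i), we compare $p$ with the constants $\psi \equiv p_0^{+}$ and $\phi \equiv p_0^{-}$. For a constant $K$ we have $\cC[K] = cK$, so $\cC[p_0^{+}] = c\,p_0^{+} \ge 0 = \cC[p]$ because $p_0^{+}\ge 0$. The initial inequality $p(\cdot,0) \le p_0^{+}$ holds by definition, so Lemma \ref{lem:comparison} gives $p \le p_0^{+}$. The lower bound follows symmetrically from $\cC[p_0^{-}] = c\,p_0^{-} \le 0$. The clamping at $0$ is needed precisely so that the zeroth-order term has the right sign.

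For monotonicity of $E$, the equation is autonomous in form, so we restart the argument at any time $t_1$ in place of $0$. This gives $\max p(\cdot,t) \le \max\{0,\max p(\cdot,t_1)\} \le E(t_1)$, and similarly from below, for $t \ge t_1$. Hence $E(t)\le E(t_1)$.

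Part (ii) is a rewriting of (i), since $S/u = \beta - p$. Thus
\[
\beta - p_0^{+} \le \frac{S}{u} \le \beta - p_0^{-}.
\]
Checking definitions, $\beta - p_0^{+} = \min\{\beta, \min S_0/u_0\} = h_-$ and $\beta - p_0^- = h_+$. Positivity of $h_-$ follows from $\beta>0$ and $S_0/u_0>0$.

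For (iii), part (ii) immediately gives $c \ge c_0 > 0$. We then compare with $\psi = E(0)e^{-c_0 t}$, for which
\[
\cC[\psi] = (c - c_0)\psi \ge 0 = \cC[p],
\]
and $p(\cdot,0) \le \psi(\cdot,0)$. Hence $p \le E(0)e^{-c_0 t}$, and applying the same argument to $-p$ gives the bound from below. Together these yield \eqref{eq:exp-decay}.

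The only delicate point is making sure every sign condition of Lemma \ref{lem:comparison} holds. This means $c\ge0$ together with the correct sign of $\cC$ applied to each barrier. The clamping at $0$ in (i) and the use of $c - c_0 \ge 0$ in (iii) are designed for this. Smoothness of $p$ up to $t = T$ is inherited from the solution on the compact subinterval, so no further regularity issue arises.
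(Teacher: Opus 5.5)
Your proposal is correct and follows the paper's proof essentially step by step. You use the same coefficients $a^{ij}$, $d^i$, $c$ to write \eqref{eq:residual-eq} as $\cC[p]=0$, and the constant barriers $p_0^{\pm}$ for (i). Part (ii) is the rewriting $S/u=\beta-p$, and (iii) uses the barriers $\pm E(0)e^{-c_0 t}$, for which $\cC[\pm E(0)e^{-c_0t}]=\pm(c-c_0)E(0)e^{-c_0t}$. The only cosmetic difference is in the monotonicity of $E$: you restart at time $t_1$ with $\max\{0,\max_{\Sn}p(\cdot,t_1)\}$, whereas the paper restarts with the constants $\pm E(t_0)$, and the two are equivalent.
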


\begin{proof}
Put
\begin{gather*}
a^{ij} = -f r^{\alpha}\dot{G}^{ij} > 0,
\qquad
c(x, t) = (\alpha - k - 1)\frac{S}{u} \ge 0, \\
d^i = \alpha f r^{\alpha-2}G\, u_i + 2 f r^{\alpha}\dot{G}^{ij}\frac{u_j}{u},
\end{gather*}
where the sign of $c$ uses $\alpha \ge k+1$ and $S > 0$, and introduce the parabolic operator
\begin{equation}\label{eq:Cop}
\cC[\phi] = \phi_t - a^{ij}\phi_{ij} + d^i\phi_i + c\phi .
\end{equation}
Equation \eqref{eq:residual-eq} is precisely
\begin{equation}\label{eq:Cp}
\cC[p] = 0 .
\end{equation}

(i) Since $p_0^{+} \ge 0$, the constant function $p_0^{+}$ satisfies
\[
\cC[p_0^{+}] = c\, p_0^{+} \ge 0 = \cC[p],
\qquad
p_0^{+} \ge p(\cdot, 0),
\]
so Lemma \ref{lem:comparison} gives $p \le p_0^{+}$ on $\Sn \times [0, T]$. Similarly, $p_0^{-} \le 0$ gives $\cC[p_0^{-}] = c\, p_0^{-} \le 0 = \cC[p]$, hence $p \ge p_0^{-}$. Applying the same comparison on $[t_0, T]$ with the constants $\pm E(t_0)$, for any $t_0 \in [0, T)$, gives $\abs{p(\cdot, t)} \le E(t_0)$ for $t \ge t_0$; hence $E$ is nonincreasing.

(ii) Since $p = \beta - S/u$, the bound $p \le p_0^{+}$ gives
\[
\frac{S}{u} \ge \beta - p_0^{+} = \min\Big\{ \beta, \min_{\Sn}\frac{S(\cdot, 0)}{u(\cdot, 0)} \Big\} = h_{-},
\]
and $h_{-} > 0$ because $u(\cdot, 0) > 0$ and $S(\cdot, 0) > 0$ by the strict convexity of the initial hypersurface. Similarly, $p \ge p_0^{-}$ gives $S/u \le \beta - p_0^{-} = h_{+}$.

(iii) If $\alpha > k+1$, then \eqref{eq:Su-bound} gives $c(x, t) \ge (\alpha - k - 1)h_{-} = c_0 > 0$. Set $M_0 = E(0)$ and define
\[
\Phi(t) = M_0 e^{-c_0 t}.
\]
Because $\Phi$ is spatially constant,
\[
\cC[\Phi] = (c - c_0)\Phi \ge 0,
\qquad
\cC[-\Phi] = -(c - c_0)\Phi \le 0 .
\]
At the initial slice $-\Phi(0) \le p(\cdot, 0) \le \Phi(0)$. Applying Lemma \ref{lem:comparison} once more gives
\[
-\Phi(t) \le p(\cdot, t) \le \Phi(t),
\qquad t \in [0, T],
\]
which is \eqref{eq:exp-decay}.
\end{proof}

\begin{remark}[A sharper exponential rate]\label{rem:sharper-rate}
The exponential rate in \eqref{eq:exp-decay} can in fact be taken independent of the initial datum. Put
\[
m = \alpha - k - 1 > 0,
\qquad
H = \frac{S}{u} = \beta - p .
\]
By \eqref{eq:Cp}, the function $H$ satisfies
\begin{equation}\label{eq:H-eq}
H_t - a^{ij}H_{ij} + d^iH_i = mH(\beta - H) .
\end{equation}
For a spatial constant $h$, equation \eqref{eq:H-eq} reduces to the logistic equation $h' = mh(\beta - h)$, whose solution with $h(0) > 0$ is
\begin{equation}\label{eq:logistic}
h(t) = \frac{\beta}{1 + \big( \beta/h(0) - 1 \big) e^{-m\beta t}} .
\end{equation}
Let $h^{-}$ and $h^{+}$ be the solutions \eqref{eq:logistic} with $h^{-}(0) = \min_{\Sn} H(\cdot, 0)$ and $h^{+}(0) = \max_{\Sn} H(\cdot, 0)$. The difference $\phi = H - h^{\pm}$ satisfies
\[
\phi_t - a^{ij}\phi_{ij} + d^i\phi_i + m\big( H + h^{\pm} - \beta \big)\phi = 0 .
\]
The zero-order coefficient need not be nonnegative, but the exponential change of unknown $\psi = e^{-m\beta t}\phi$ turns it into $m\big( H + h^{\pm} \big) > 0$, so Lemma \ref{lem:comparison}, applied to $\psi$, gives
\[
h^{-}(t) \le H(\cdot, t) \le h^{+}(t) .
\]
Since $\abs{\beta - h(t)} = \beta\, \abs{\beta/h(0) - 1}\, e^{-m\beta t} / \big( 1 + (\beta/h(0) - 1) e^{-m\beta t} \big)$, one obtains
\begin{equation}\label{eq:sharper-rate}
E(t) = \norm{p(\cdot, t)}_{L^{\infty}(\Sn)} \le \Big( 1 + \frac{\beta}{h^{-}(0)} \Big) E(0)\, e^{-m\beta t},
\qquad
m\beta = \beta(\alpha - k - 1) .
\end{equation}
Thus the exponent is independent of the initial datum; only the prefactor depends on it. The Schauder argument of Section \ref{sec:conv} then yields the same exponent in every $C^j$ norm, $j \ge 0$.
\end{remark}

\begin{corollary}[Two-sided speed and $\sigma_k$ bounds]\label{cor:speed}
There exists $C \ge 1$, depending only on the initial datum and the fixed data, such that
\begin{equation}\label{eq:speed-bounds}
C^{-1} \le S = f(x)r^{\alpha}\sigma_k(\kappa) \le C,
\qquad
C^{-1} \le \sigma_k(\kappa) \le C
\end{equation}
on the whole smooth strictly convex existence interval.
\end{corollary}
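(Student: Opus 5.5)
The plan is to combine the two-sided bound on $S/u$ from Proposition \ref{prop:residual-bounds}(ii) with the $C^0$ bounds of Lemmas \ref{lem:r-bound} and \ref{lem:u-bound}. No further maximum-principle argument is needed.

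\emph{Bound on $S$.} Proposition \ref{prop:residual-bounds}(ii) gives
\[
h_{-}u \le S \le h_{+}u
\]
on every compact subinterval $[0,T]$ of the existence interval. Here $h_\pm$ depend only on the initial slice and $\beta$, not on $T$, so the bound holds on the whole existence interval. Lemma \ref{lem:u-bound} gives $C^{-1}\le u\le C$, with $C$ depending only on the initial radial function and the data \eqref{eq:data}. Combining the two,
\[
h_{-}C^{-1}\le S\le h_{+}C .
\]
The lower bound is positive because $h_{-}>0$, which follows from strict convexity of the initial hypersurface.

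\emph{Bound on $\sigma_k$.} Write
\[
\sigma_k(\kappa)=\frac{S}{f(x)\,r^{\alpha}} .
\]
We have $f_{-}\le f\le f_{+}$ with $f_->0$, and $r_{-}\le r\le r_{+}$ by Lemma \ref{lem:r-bound}. Since $\alpha>0$, this gives
\[
f_{-}r_{-}^{\alpha}\le f r^{\alpha}\le f_{+}r_{+}^{\alpha} .
\]
Dividing the bounds on $S$ by these bounds gives
\[
\frac{h_{-}}{C f_{+}r_{+}^{\alpha}}\le \sigma_k(\kappa)\le \frac{h_{+}C}{f_{-}r_{-}^{\alpha}} .
\]
Taking the larger of the resulting constants yields \eqref{eq:speed-bounds}.

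\emph{The one point to check.} The real content is that $h_\pm$ are independent of $T$. Part (ii) of Proposition \ref{prop:residual-bounds} is stated only for compact subintervals, but its constants are defined from the $t=0$ slice alone, via \eqref{eq:hpm}. The bound therefore extends to the full, possibly half-open, existence interval. Note that only $\alpha\ge k+1$ is used here, not strict supercriticality.
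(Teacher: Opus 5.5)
Your argument is correct and matches the paper's proof: you combine the $T$-independent bounds $h_{-}\le S/u\le h_{+}$ from Proposition~\ref{prop:residual-bounds}(ii) with the $C^0$ bounds to bound $S$, then divide by $f r^{\alpha}$ to bound $\sigma_k$. (The paper writes the first step as $h_{-}r_{-}\le S\le h_{+}r_{+}$, using $\min u=\min r$ and $\max u=\max r$.) One correction to your closing remark: the residual step needs only $\alpha\ge k+1$, but the time-independent $C^0$ bounds of Lemma~\ref{lem:r-bound} that you invoke require $m=\alpha-k-1>0$, so the corollary as stated does rely on strict supercriticality.
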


\begin{proof}
Proposition \ref{prop:residual-bounds}(ii) and Lemma \ref{lem:u-bound} give
\[
h_{-} r_{-} \le S \le h_{+} r_{+} .
\]
Dividing by $f r^{\alpha}$ and using the fixed two-sided bounds for $f$ and $r$ proves the asserted bounds for $\sigma_k$. Notice that neither $\bn f$ nor $\bn^2 f$ occurs.
\end{proof}

\subsection{The principal-radius estimate for general data}\label{subsec:c2}

The missing lower bound for the principal curvatures is equivalent to an upper bound for the largest eigenvalue of the radius tensor $b_{ij}$. Among the estimates up to second order, the principal-radius estimate is the first to involve spatial derivatives of $f$. We give the calculation in detail: first replace the largest eigenvalue by a smooth touching component, and then keep the inverse-concavity and radial terms in a single quadratic form before estimating the data-dependent terms. The isotropic case follows as a consequence (Corollary \ref{cor:iso-c2}); for the corresponding intrinsic calculation in Euclidean space, see \cite[Lemma 3.7]{LSW20}.

\begin{proposition}[The anisotropic $C^2$ estimate]\label{prop:aniso-c2}
Let $f \in C^2(\Sn)$ be positive and let $\alpha \ge k+1$. Suppose a smooth strictly convex solution has uniform positive upper and lower bounds for $r$ and $u$. Then
\begin{equation}\label{eq:aniso-c2}
\lambda_{\max}(b_{ij}) \le C,
\end{equation}
where $C$ depends only on $n$, $k$, $\alpha$, the assumed two-sided bounds for $r$ and $u$, $f_{-}^{-1}$, $\norm{f}_{C^2(\Sn)}$, and the initial maximum of $\Lambda/u$.
\end{proposition}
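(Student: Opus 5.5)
We prove the principal-radius bound by a maximum-principle argument for the test quantity $\Lambda/u$. Here $\Lambda(x,t)$ is the largest eigenvalue of $b_{ij}=u_{ij}+u\bg_{ij}$ with respect to $\bg$; the constant in the statement involves the initial maximum of $\Lambda/u$, which points to this choice. Fix a time $t_0$ and a point $x_0$ where $\Lambda/u$ attains a new spatial maximum. Choose normal coordinates at $x_0$ diagonalizing $b$ with $b_{11}=\Lambda$. We replace $\Lambda$ by the smooth touching function $b_{11}/u$, computed with respect to a fixed unit vector field extended parallel at $x_0$. This function touches $\Lambda/u$ from below, so the maximum principle may be applied to it. It suffices to show that $\cL(b_{11}/u)<0$ at such a point whenever $\Lambda/u$ exceeds a constant $C$. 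Since $\cL=\partial_t-a^{ij}\bn_i\bn_j$ with $a^{ij}=-fr^\alpha\dot G^{ij}>0$, a negative value is incompatible with a new maximum.

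\emph{Step 1 (evolution of $b_{11}$).} It is cleaner to work with the concave function $\td F(b)=G^{-1/k}$ of \eqref{eq:tildeF}. Write the equation as $u_t=-fr^\alpha\td F^{-k}+\beta u$, that is, $\log(\beta u-u_t)=\log f+\alpha\log r-k\log\td F$. Differentiate this twice in the $e_1$ direction and use $(b_{11})_t=(u_t)_{11}+u_t$. Commute third derivatives with the sphere commutation formula $b_{11;ii}=b_{ii;11}+b_{11}-b_{ii}$, which comes from the Codazzi property of $b$. This produces the following terms:
\begin{itemize}
\item the linear terms $fr^\alpha\dot G^{ii}(b_{11}-b_{ii})$;
\item the second-derivative term $-fr^\alpha\ddot G^{ij,rs}b_{ij1}b_{rs1}$, which carries the sign coming from the concavity of $\td F$;
\item the first-derivative term $-(\log f)_{1}\,(\ldots)$ and the second-derivative term $-(\log f)_{11}S$;
\item the radial terms from $\alpha(\log r)_{11}$ with $r^2=u^2+\abs{\bn u}^2$, namely $(\log r)_1=b_{11}u_1/r^2$ and $(\log r)_{11}\approx(b_{11}^2+u_kb_{11k})/r^2+O(b_{11})$.
\end{itemize}
Dividing by $u$ and using \eqref{eq:Lu} for $\cL u$ gives the inequality for $b_{11}/u$. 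At the maximum the first-order condition is $b_{11i}=b_{11}u_i/u$, and the second-order condition lets us discard the Hessian term.

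\emph{Step 2 (sign bookkeeping).} We keep the inverse-concavity term and the radial quadratic term $\alpha S\,(\log r)_1^2$ in a single quadratic form in $\bn b_{11}$ and $\bn\log f$. The key combination, which follows Li--Sheng--Wang, is the concavity of $\td F$ in the form of Lemma \ref{lem:invconc}, with $\eta_{ij}=b_{ij1}$. It controls $-\ddot G\,\bn_1b\,\bn_1b$ from below by a positive multiple of $G^{-1}(\dot G^{ij}b_{ij1})^2$. The differentiated equation expresses $\dot G^{ij}b_{ij1}$ in terms of $(\log f)_1$, $\alpha(\log r)_1$ and $(\beta u-u_t)_1$.

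\emph{Step 3 (absorbing the data terms).} The dangerous term $(\log f)_1\cdot b_{11}u_1/r^2$ is a cross term. Complete the square in the combined quadratic form: the positive part proportional to $b_{11}^2(u_1/r^2)^2$ absorbs it, at the cost of $C\abs{\bn\log f}^2$. The term $(\log f)_{11}$ contributes only $O(1)$ after multiplication by $S$. Here $S$ is bounded by Corollary \ref{cor:speed}, and $\beta u-u_t=S$ is bounded by Proposition \ref{prop:residual-bounds}. The good term is $-fr^\alpha\dot G^{ii}b_{ii}\cdot b_{11}/(\cdots)$, which is of order $b_{11}^2$ after using $-\dot G^{11}=\kappa_1^2\sigma_{k-1}(\kappa|1)$ together with the $\sigma_k$ lower bound. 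We expect it to give $\cL(b_{11}/u)\le -c\,(b_{11}/u)^2+C(b_{11}/u)+C$, and hence $\Lambda/u\le\max\{C,\max_{t=0}\Lambda/u\}$.

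The main obstacle is Step 3: making sure that after completing the square the surviving coefficient in front of $b_{11}^2$ is positive. The bad radial contribution $-\alpha S b_{11}^2/r^2$, the first piece of $(\log r)_{11}$, has the wrong sign for a contracting speed. It must be beaten by the combination of the $+\alpha^2S(\log r)_1^2$ term from the square, the inverse-concavity term, and the contribution $+b_{11}u_1^2/u^2$ from $\cL u$ together with the gradient condition. This is where $\alpha\ge k+1$ enters. We would first check this inequality in the isotropic case $f\equiv1$, where it reduces to \cite[Lemma 3.7]{LSW20}, and then confirm that the $\bn\log f$ cross terms only perturb lower-order coefficients.
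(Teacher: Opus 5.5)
Your framework matches the paper's: a smooth touching barrier $b_{11}/u$, the maximum principle, inverse concavity with $\eta_{ij}=b_{ij;1}$, and completing a square. The target inequality $\cL(b_{11}/u)\le -c(b_{11}/u)^2+C(b_{11}/u)+C$ is also the right one. However, your sign bookkeeping in Steps 2--3 is wrong, and as written the argument does not close.

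\textbf{Your claimed good term is not of order $b_{11}^2$.} By homogeneity $-\dot G^{ii}b_{ii}=kG$, so the linear terms are $fr^\alpha\dot G^{ii}(b_{11}-b_{ii})=-S\,T\,b_{11}+kS$, where $T=\sum_i\kappa_i^2\sigma_{k-1}(\kappa\,|\,i)/\sigma_k(\kappa)$. In your frame $e_1$ is the direction of the \emph{largest} radius, so $-\dot G^{11}=\Lambda^{-2}\sigma_{k-1}(\kappa\,|\,1)$ is tiny. For $k<n$ nothing forces $T$ to grow with $\Lambda$, so this only gives $-c\,b_{11}$.

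\textbf{The term you call bad is the actual good term.} $-S_{11}$ contributes $-\alpha S\,r_{11}/r$ to $(b_{11})_t$. At the critical point, using Codazzi and $b_{11;k}=b_{11}u_k/u$,
\[
\frac{r_{11}}{r}=\frac{b_{11}^2}{r^2}\Big(1-\frac{u_1^2}{r^2}\Big)-\frac{u\,b_{11}}{r^2}+\frac{b_{11}\abs{\bn u}^2}{u\,r^2}\ \ge\ \frac{u^2}{r^4}\,b_{11}^2-\frac{u}{r^2}\,b_{11},
\]
since $u_1^2\le r^2-u^2$. This negative contribution, at most $-\alpha S\frac{u^2}{r^4}b_{11}^2+Cb_{11}$, is exactly what produces $-c(b_{11}/u)^2$ (the paper's $-c_*\lambda$). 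Trying to ``beat'' it inverts the argument.

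\textbf{You misplace where $\alpha\ge k+1$ enters.} Write $A=(\log r)_1$, $B=(\log G)_1$, $\gamma=\log f$, and expand $S_{11}/S=(\log S)_{11}+((\log S)_1)^2$. The $B^2$ from $(\log G)_{11}$ cancels the $B^2$ from the square. The surviving $\eta$-dependent and mixed terms are $\ddot G(\eta,\eta)/G+2\alpha AB+\alpha(\alpha-1)A^2$. Two features make this the crux:
\begin{itemize}
\item $A=b_{11}u_1/r^2$ has size $b_{11}$, so $2\alpha AB$ is not lower order.
\item $B$ contains the uncontrolled $b_{pq;1}$. Substituting it via the differentiated equation only trades it for $\bn u_t$, which is not bounded, so $B$ must stay a free variable.
\end{itemize}
Inverse concavity gives $\ddot G(\eta,\eta)\ge\frac{k+1}{k}G^{-1}(\dot G^{ij}\eta_{ij})^2$ (a lower bound on $\ddot G$, not on $-\ddot G$). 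Then
\[
\tfrac{k+1}{k}B^2+2\alpha AB+\alpha(\alpha-1)A^2=\tfrac{k+1}{k}\big(B+\tfrac{\alpha k}{k+1}A\big)^2+\tfrac{\alpha(\alpha-k-1)}{k+1}A^2,
\]
which is nonnegative exactly when $\alpha\ge k+1$. That is the only place the hypothesis is used.

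\textbf{The data cross term you flag is the wrong one.} The dangerous term is $2\gamma_1B$, which is linear in the third derivatives. It is absorbed into $\frac{k+1}{k}Z^2$ with $Z=B+\frac{\alpha k}{k+1}A$, at the cost of $C\abs{\bn\gamma}^2$. The term $\gamma_1A$ you worry about is harmless, because $A/b_{11}=u_1/r^2$ is bounded, so it is only $O(b_{11})$. Absorbing it into the $A^2$ coefficient, as you propose, would fail at $\alpha=k+1$, where that coefficient vanishes; the statement includes that case.
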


\begin{proof}
We use inverse Gauss-map coordinates, so the background metric $\bg$ is fixed and $f = f(x)$ is independent of time. Let $\Lambda(x, t)$ be the largest eigenvalue of $b_{ij}$ and consider
\begin{equation}\label{eq:W-def}
W = \log\Lambda - \log u .
\end{equation}
Fix $0 < T' < T$ and suppose that $W$ attains its maximum on $\Sn \times [0, T']$ at $(x_0, t_0)$ with $t_0 > 0$; otherwise the estimate follows from the initial data. Choose a $\bg$-unit eigenvector $e_1$ of $b$ at the maximum, extend $e_1$ to a vector field that remains $\bg$-unit in a spatial neighborhood of $x_0$, is independent of $t$, and satisfies $\bn e_1 = 0$ at $x_0$ (for instance, extend it as a constant field in a normal coordinate chart centered at $x_0$ and normalize), and put
\[
\lambda(x, t) = b(e_1, e_1),
\qquad
w = \log\lambda - \log u .
\]
The Rayleigh characterization gives $\lambda \le \Lambda$, with equality at $(x_0, t_0)$. Thus $w$ is a smooth lower barrier for $W$ which also attains a local maximum there. In a frame diagonalizing $b$ at the touching point, $\lambda = b_{11}$; differentiating $\abs{e_1}^2 = 1$ twice at $x_0$ and using $\bn e_1 = 0$ gives $\langle \bn_i\bn_j e_1, e_1\rangle = 0$, so the extension contributes no additional second-derivative terms to $b_{11;ij}$, and
\begin{align}
\frac{b_{11;i}}{b_{11}} &= \frac{u_i}{u}, \label{eq:first-deriv} \\
0 \ge w_{ij} &= \frac{b_{11;ij}}{b_{11}} - \frac{b_{11;i}b_{11;j}}{b_{11}^2} - \frac{u_{ij}}{u} + \frac{u_i u_j}{u^2}. \label{eq:w-hess}
\end{align}

\medskip\noindent\emph{Step 1: the tensor evolution and the normalized operator.}
From $b_{ij} = u_{ij} + u\bg_{ij}$ and \eqref{eq:support-eq},
\begin{equation}\label{eq:b-evol}
(b_{ij})_t = -S_{ij} - S\bg_{ij} + \beta b_{ij}.
\end{equation}
Write
\[
\gamma = \log f,
\qquad
P = r^{\alpha}G(b),
\qquad
S = e^{\gamma}P,
\]
and define
\begin{equation}\label{eq:ahat}
a^{ij} := -\frac{\partial S}{\partial b_{ij}} = -f r^{\alpha}\dot{G}^{ij} > 0,
\qquad
\wh{a}^{ij} := S^{-1}a^{ij} = -\frac{\dot{G}^{ij}}{G},
\end{equation}
and the normalized operator
\[
\cP := S^{-1}\partial_t - \wh{a}^{ij}\bn_i\bn_j .
\]
The normalization by $S$ removes the size of the speed from the principal part. We shall also use the sphere commutation and homogeneity identities
\begin{align}
b_{pq;11} &= b_{11;pq} + \bg_{11}b_{pq} - \bg_{pq}b_{11}, \label{eq:commutation} \\
\dot{G}^{pq}b_{pq} &= -kG,
\qquad
\wh{a}^{pq}b_{pq} = k . \label{eq:homog}
\end{align}
At the maximum of the barrier, \eqref{eq:w-hess} and ellipticity imply $\cP w \ge 0$.

\medskip\noindent\emph{Step 2: the exact normalized operator.}
At the touching point write
\[
\lambda = b_{11},
\qquad
A = (\log r)_1,
\qquad
B = (\log G)_1,
\qquad
T = \wh{a}^{pq}\bg_{pq},
\]
and put $\eta_{pq} = b_{pq;1}$. The first-derivative identity \eqref{eq:first-deriv} cancels the two gradient-square terms in the quotient rule. Using \eqref{eq:b-evol}, we obtain
\begin{equation}\label{eq:Pw-exact}
\cP w = -\frac{S_{11}}{S\lambda} - \frac{\wh{a}^{pq}b_{11;pq}}{\lambda} - \frac{1}{\lambda} + \frac{1}{u} + \frac{\wh{a}^{pq}u_{pq}}{u} .
\end{equation}
Since $u_{pq} = b_{pq} - u\bg_{pq}$, the homogeneity identity \eqref{eq:homog} gives
\begin{equation}\label{eq:trace-term}
\frac{\wh{a}^{pq}u_{pq}}{u} = \frac{k}{u} - T .
\end{equation}
The terms $\beta/S$ have already canceled between $(\log b_{11})_t/S$ and $(\log u)_t/S$ in \eqref{eq:Pw-exact}.

The complete product rule for $S = e^{\gamma}r^{\alpha}G$ is
\begin{equation}\label{eq:S11}
\begin{aligned}
\frac{S_{11}}{S} = \;& \frac{\dot{G}^{pq}}{G}b_{pq;11} + \frac{\ddot{G}^{pq,rs}}{G}b_{pq;1}b_{rs;1} + \alpha\frac{r_{11}}{r} + \alpha(\alpha-1)A^2 + 2\alpha AB \\
& + \gamma_{11} + \gamma_1^2 + 2\gamma_1(\alpha A + B).
\end{aligned}
\end{equation}
Define
\begin{equation}\label{eq:Q3}
Q_3 := \frac{\ddot{G}^{pq,rs}b_{pq;1}b_{rs;1}}{\lambda G} .
\end{equation}
Because $\dot{G}/G = -\wh{a}$, the commutation formula \eqref{eq:commutation} and \eqref{eq:homog} yield
\begin{equation}\label{eq:commuted}
\frac{\wh{a}^{pq}(b_{pq;11} - b_{11;pq})}{\lambda} = \frac{k}{\lambda} - T .
\end{equation}
Substituting \eqref{eq:trace-term}--\eqref{eq:commuted} into \eqref{eq:Pw-exact} gives the exact identity
\begin{equation}\label{eq:Pw-full}
\cP w = \frac{k-1}{\lambda} + \frac{k+1}{u} - 2T - Q_3 - \frac{\alpha}{\lambda}\Big\{ \frac{r_{11}}{r} + (\alpha-1)A^2 + 2AB \Big\} + \wh{E}_f,
\end{equation}
where the complete angular-data error is
\begin{equation}\label{eq:Ef}
\wh{E}_f = -\frac{1}{\lambda}\big\{ \gamma_{11} + \gamma_1^2 + 2\gamma_1(\alpha A + B) \big\}.
\end{equation}
In particular, the trace term has the favorable sign $-2T$.

\medskip\noindent\emph{Step 3: the radial term and the combined quadratic form.}
We first translate the radial Hessian into the fixed spherical coordinates. Let $\nabla^M$ and $\Gamma^{M,q}_{ij}$ denote the connection and its Christoffel symbols for the induced metric on $M_t$. The identity $X_i = b_{ij}e_j$ implies, in spherical normal coordinates at the touching point,
\begin{equation}\label{eq:christoffel}
\Gamma^{M,q}_{11} b_{q\ell} = b_{1\ell;1} .
\end{equation}
Only this diagonal case is needed below; for repeated lower indices the remaining symmetry terms in the Christoffel symbols cancel by Codazzi.
By Codazzi, \eqref{eq:first-deriv}, and $r_i = u_j b_{ji}/r$, it follows that
\begin{equation}\label{eq:gamma11}
\Gamma^{M,q}_{11} r_q = \frac{\lambda \abs{\bn u}^2}{u r} .
\end{equation}
Since $X_{;1} = \lambda E_1$, where $E_1$ is the corresponding intrinsic unit principal direction, the scalar Hessian formula gives
\begin{equation}\label{eq:r11}
\frac{r_{11}}{r} = \frac{\lambda^2}{r}\, \nabla^{M,2}r(E_1, E_1) + \frac{\lambda \abs{\bn u}^2}{u r^2} .
\end{equation}
On the other hand, the Euclidean radial Hessian identity is
\begin{equation}\label{eq:radial-hessian}
\nabla^{M,2}r(E_1, E_1) = \frac{1}{r}\Big\{ -\frac{u}{\lambda} + 1 - (E_1 r)^2 \Big\}.
\end{equation}
Then the $C^1$ estimate gives
\[
1 - (E_1 r)^2 \ge \frac{u^2}{r^2} \ge \frac{u_{-}^2}{r_{+}^2} .
\]
Therefore, after discarding the nonpositive last term generated by \eqref{eq:r11},
\begin{equation}\label{eq:radial-term}
-\frac{\alpha}{\lambda}\, \frac{r_{11}}{r} \le -c_{*}\lambda + C,
\qquad
c_{*} := \frac{\alpha u_{-}^2}{r_{+}^4} > 0 .
\end{equation}

It remains essential to keep the inverse-concavity and mixed radial terms together. The function $G^{-1/k}$ is concave on the positive cone (see \eqref{eq:tildeF}). Thus
\[
0 \ge D^2\big(G^{-1/k}\big)[\eta, \eta] = -\frac{1}{k}G^{-1/k-1}D^2G[\eta, \eta] + \frac{k+1}{k^2}G^{-1/k-2}\big(DG[\eta]\big)^2 .
\]
Since $DG[\eta] = G_1 = GB$, this proves
\begin{equation}\label{eq:Q3-bound}
Q_3 \ge \frac{k+1}{k\lambda}\, B^2 .
\end{equation}
Define the combined form
\begin{equation}\label{eq:Qc}
Q_c := Q_3 + \frac{2\alpha AB + \alpha(\alpha-1)A^2}{\lambda} .
\end{equation}
Completing the square only once gives
\begin{equation}\label{eq:Qc-square}
Q_c \ge \frac{k+1}{k\lambda}\Big( B + \frac{\alpha k}{k+1}A \Big)^2 + \frac{\alpha(\alpha-k-1)}{(k+1)\lambda}\, A^2 \ge 0 .
\end{equation}
Here the last inequality is exactly where $\alpha \ge k+1$ is used. On the branch $\lambda \ge 1$, the zero-order terms in \eqref{eq:Pw-full} are bounded above, while $T \ge 0$. Combining \eqref{eq:Pw-full}, \eqref{eq:radial-term}, and \eqref{eq:Qc}, we obtain the core inequality
\begin{equation}\label{eq:core-ineq}
0 \le \cP w \le -c_{*}\lambda + C - Q_c + \wh{E}_f .
\end{equation}

\medskip\noindent\emph{Step 4: absorption of the data error.}
Differentiating $r^2 = u^2 + \abs{\bn u}^2$ and using $u_{ij} = b_{ij} - u\bg_{ij}$ gives
\begin{equation}\label{eq:r1-ids}
r_i = \frac{u_j b_{ji}}{r},
\qquad
r_1 = \frac{u_1 \lambda}{r},
\qquad
\frac{A}{\lambda} = \frac{u_1}{r^2} .
\end{equation}
Set
\[
Z := B + \frac{\alpha k}{k+1}A .
\]
Then
\begin{equation}\label{eq:Z-id}
\alpha A + B = Z + \frac{\alpha}{k+1}A .
\end{equation}
Since $f$ is positive and belongs to $C^2(\Sn)$,
\[
\norm{\bn\gamma}_{L^{\infty}} + \norm{\bn^2\gamma}_{L^{\infty}} \le C\big( f_{-}^{-1}, \norm{f}_{C^2} \big).
\]
Equations \eqref{eq:Ef}, \eqref{eq:r1-ids}, and \eqref{eq:Z-id} therefore give, for every $0 < \theta < 1$,
\begin{equation}\label{eq:Ef-absorb}
\wh{E}_f \le C + \frac{2\abs{\gamma_1}\abs{Z}}{\lambda} + \frac{2\alpha\abs{\gamma_1}\abs{A}}{(k+1)\lambda}
\le C_{\theta} + \theta\, \frac{k+1}{k\lambda}Z^2
\le C_{\theta} + \theta Q_c .
\end{equation}
This estimate also covers the critical value $\alpha = k+1$: although the last $A^2$ term in \eqref{eq:Qc-square} then vanishes, the remaining multiple of $A/\lambda$ in \eqref{eq:Z-id} is bounded by \eqref{eq:r1-ids}.

\medskip\noindent\emph{Step 5: the maximum-principle conclusion.}
Choose $\theta = 1/2$. Substitution of \eqref{eq:Ef-absorb} into \eqref{eq:core-ineq} gives
\[
0 \le \cP w \le -c_{*}\lambda + C - \frac{1}{2}Q_c \le -c_{*}\lambda + C .
\]
It follows that $\lambda \le C/c_{*}$ at an interior maximum. Hence
\[
\max_{\Sn \times [0, T']} W \le \max\Big\{ \max_{\Sn} W(\cdot, 0),\ \log(C/c_{*}) - \log u_{-} \Big\}.
\]
For every $(x, t)$ we therefore have $\Lambda(x, t) \le u_{+}\exp(\max W) \le C$. Letting $T' \uparrow T$ proves \eqref{eq:aniso-c2}. Notice in particular that the derivatives of $f$ enter only through \eqref{eq:Ef}; no uncontrolled trace of $\wh{a}^{ij}$ is created.
\end{proof}

\begin{corollary}[The isotropic case]\label{cor:iso-c2}
If $f$ is constant and $\alpha \ge k+1$, then every smooth strictly convex solution satisfying the $C^0$ and $C^1$ bounds obeys
\begin{equation}\label{eq:iso-c2-bound}
\lambda_{\max}(b_{ij}) \le C .
\end{equation}
\end{corollary}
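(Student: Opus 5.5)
The isotropic case is a direct specialization of Proposition \ref{prop:aniso-c2}, so the plan is to invoke that result with $f \equiv c > 0$. First I check its hypotheses. A positive constant is in $C^2(\Sn)$, with $f_{-}^{-1} = c^{-1}$ and $\norm{f}_{C^2(\Sn)} = c$. The assumed $C^0$ and $C^1$ bounds give uniform positive upper and lower bounds for $r$ and $u$; in the flow setting these are Lemmas \ref{lem:r-bound} and \ref{lem:u-bound}, which rely on the relation \eqref{eq:minmax-ur}. Proposition \ref{prop:aniso-c2} then gives $\lambda_{\max}(b_{ij}) \le C$. The constant $C$ depends only on $n$, $k$, $\alpha$, $c$, the two-sided bounds for $r$ and $u$, and the initial maximum of $\Lambda/u$. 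The last quantity is finite because the initial hypersurface is smooth and strictly convex, and it should be read as part of the data implicit in ``$C$''.

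For the reader it is worth recording why the argument simplifies when $f$ is constant, without redoing the calculation. With $\gamma = \log f$ constant, every term of $\wh{E}_f$ in \eqref{eq:Ef} vanishes, so Step 4 of the proof of Proposition \ref{prop:aniso-c2} becomes unnecessary. The core inequality \eqref{eq:core-ineq}, together with $Q_c \ge 0$ from \eqref{eq:Qc-square}, then gives directly
\[
0 \le \cP w \le -c_{*}\lambda + C
\]
at an interior maximum with $\lambda \ge 1$. This forces $\lambda \le C/c_{*}$, and Step 5 concludes exactly as before. In this case the inverse-concavity bound \eqref{eq:Q3-bound} is needed only to control the mixed radial term $2\alpha AB$. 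This is where $\alpha \ge k+1$ enters.

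There is no real obstacle. The only care needed is to confirm that nothing in the proposition depended on $\bn f \ne 0$; in particular, that $c_{*}$ in \eqref{eq:radial-term} depends only on $\alpha$, $u_{-}$ and $r_{+}$. It does, so the corollary follows.
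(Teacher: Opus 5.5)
Your proposal is correct and matches the paper's proof: the corollary is the constant-data case of Proposition \ref{prop:aniso-c2}. With $\gamma = \log f$ constant, $\wh{E}_f \equiv 0$, so Step 4 is unnecessary, and \eqref{eq:core-ineq} together with $Q_c \ge 0$ (which is where $\alpha \ge k+1$ enters) gives $0 \le \cP w \le -c_{*}\lambda + C$ at the maximum.
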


\begin{proof}
This is the constant-data case of Proposition \ref{prop:aniso-c2}. If $f$ is constant, then $\gamma = \log f$ is constant, so $\wh{E}_f \equiv 0$ by \eqref{eq:Ef} and the absorption in Step 4 is unnecessary: \eqref{eq:core-ineq} already gives $0 \le \cP w \le -c_{*}\lambda + C$ at the maximum. This recovers, in the present normalization, the isotropic estimate of \cite[Lemma 3.7]{LSW20}.
\end{proof}

\begin{corollary}[Two-sided curvature bound]\label{cor:curv-bound}
There exists $C \ge 1$ such that
\begin{equation}\label{eq:curv-bound}
C^{-1} \le \kappa_i(\cdot, t) \le C
\end{equation}
for every $i = 1, \dots, n$ and every time in the existence interval.
\end{corollary}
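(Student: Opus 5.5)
The plan is to combine the principal-radius bound of Proposition \ref{prop:aniso-c2} with the two-sided $\sigma_k$ bound of Corollary \ref{cor:speed}. The hypotheses of Proposition \ref{prop:aniso-c2} hold on the whole existence interval. Lemmas \ref{lem:r-bound} and \ref{lem:u-bound} give time-independent two-sided bounds for $r$ and $u$, $f$ is smooth and positive, and the initial value of $\Lambda/u$ is a fixed number determined by the initial hypersurface. Hence $\lambda_{\max}(b) \le C$ with $C$ independent of $t$. Since $\kappa_i = \lambda_i^{-1}$, this gives at once the lower bound
\[
\kappa_{\min} = \kappa_n = \lambda_n^{-1} \ge C^{-1}.
\]

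For the upper bound I will use $\sigma_k(\kappa) \le C$ from Corollary \ref{cor:speed}. All $\kappa_i$ are positive, and the ordering is $\kappa_1 \ge \cdots \ge \kappa_n$. The monomial $\kappa_1\kappa_{n-k+2}\cdots\kappa_n$ consists of $\kappa_1$ times the $k-1$ smallest curvatures. For $k=1$ it is just $\kappa_1$. It is one of the terms in $\sigma_k$, so
\[
\kappa_1 \,\kappa_n^{k-1} \le \kappa_1\kappa_{n-k+2}\cdots\kappa_n \le \sigma_k(\kappa) \le C .
\]
Combining this with $\kappa_n \ge C^{-1}$ yields $\kappa_1 \le C\cdot C^{k-1}$. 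Every $\kappa_i$ therefore lies in $[C^{-1}, C]$ after enlarging $C$.

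The only point needing care is uniformity in time. Corollary \ref{cor:speed} is already stated on the whole smooth strictly convex existence interval. The constant in Proposition \ref{prop:aniso-c2} depends only on quantities that have just been shown to be time-independent. So no step is a genuine obstacle, since the hard work was done in Proposition \ref{prop:aniso-c2}. The one thing to check is that the elementary monomial estimate uses nothing beyond positivity of the curvatures, which strict convexity provides throughout the flow.
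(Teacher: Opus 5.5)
Your proof is correct and takes essentially the same approach as the paper. The lower bound comes from the principal-radius estimate of Proposition \ref{prop:aniso-c2}, whose constants are time-independent, and the upper bound comes from $\sigma_k(\kappa)\le C$ in Corollary \ref{cor:speed} together with positivity of the curvatures. The only difference is cosmetic: you bound $\sigma_k$ from below by the single monomial $\kappa_1\kappa_n^{k-1}$, whereas the paper uses $\sigma_k(\kappa)\ge \kappa_i\,\sigma_{k-1}(\kappa\,|\,i)\ge c\,\kappa_i$.
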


\begin{proof}
Proposition \ref{prop:aniso-c2} gives $\kappa_i \ge C^{-1}$. Since all curvatures are positive, each monomial in $\sigma_k$ is positive. For a fixed $i$, expansion along the index $i$ gives
\[
\sigma_k(\kappa) \ge \kappa_i\,\sigma_{k-1}(\kappa \,|\, i),
\]
and the uniform lower bound for the remaining curvatures gives $\sigma_{k-1}(\kappa \,|\, i) \ge c > 0$. Corollary \ref{cor:speed} then gives
\[
C \ge \sigma_k(\kappa) \ge c\kappa_i,
\]
and hence $\kappa_i \le C$.
\end{proof}

\section{Long-time existence and higher regularity}\label{sec:reg}

For a smooth strictly convex initial support function $u_0$, the matrix $b[u_0] = \bn^2 u_0 + u_0\bg$ is positive definite. By \eqref{eq:Gdot-negative}, equation \eqref{eq:support-eq} is strictly parabolic in a neighborhood of $u_0$. Standard fully nonlinear parabolic theory (see, for instance, \cite{Ger06}) therefore gives a unique short-time solution. Strict convexity is an open condition, and the solution can be continued as long as the eigenvalues of $b$ stay in a compact subset of the positive cone and a fixed spatial H\"older norm remains bounded.

\begin{proposition}[Higher estimates]\label{prop:higher}
For every integer $m \ge 0$ there is a constant $C_m$ such that
\begin{equation}\label{eq:higher}
\norm{r(\cdot, t)}_{C^m(\Sn)} + \norm{u(\cdot, t)}_{C^m(\Sn)} \le C_m
\end{equation}
for all times in the maximal existence interval.
\end{proposition}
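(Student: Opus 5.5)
The plan is to derive the higher estimates from the uniform second-order control already established, using the standard concave fully nonlinear parabolic machinery. By Lemmas \ref{lem:r-bound} and \ref{lem:u-bound}, together with Corollary \ref{cor:curv-bound}, the support function satisfies $C^{-1} \le u \le C$ and $\abs{\bn u} \le C$ uniformly in time. The eigenvalues of $b = \bn^2 u + u\bg$ lie in a fixed compact subset $[C^{-1}, C]$ of the positive half-line. In particular $\norm{u}_{C^2(\Sn)} \le C$, and the operator $-\dot{G}^{ij}$ is uniformly elliptic with constants independent of $t$.

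To apply Krylov--Safonov / Evans--Krylov we need a concave form of the equation. I will rewrite \eqref{eq:support-eq} using $\td{F}(b) = G(b)^{-1/k}$, which is concave on the positive cone by Lemma \ref{lem:invconc}/\eqref{eq:tildeF}. Since $u_t = \beta u - S$ and $S = f r^{\alpha}\td{F}(b)^{-k}$, setting $\Theta := (f r^{\alpha}/(\beta u - u_t))^{1/k}$ gives
\[
\td{F}(\bn^2 u + u\bg) = \Theta(x, u, \bn u, u_t).
\]
The difficulty is that $u_t$ enters nonlinearly. The cleaner route is to write
\[
u_t = \Psi(x, u, \bn u, \td{F}(b)), \qquad \Psi = \beta u - f r^{\alpha}\td{F}^{-k}.
\]
Here $\Psi$ is increasing in $\td{F}$, and $z \mapsto -z^{-k}$ is concave and increasing. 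Hence the composition $b \mapsto -\td{F}(b)^{-k}$ is concave in $b$, being a concave increasing function of a concave function. The equation therefore has the form $u_t = \mathcal{F}(\bn^2 u, \bn u, u, x)$, with $\mathcal{F}$ concave in the Hessian variable on the relevant compact set of matrices, uniformly parabolic, and with $C^1$-bounded dependence on the lower-order variables, given $f \in C^\infty$ and $r = \sqrt{u^2 + \abs{\bn u}^2}$.

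With $\norm{u}_{C^2}$ bounded and the equation concave and uniformly parabolic, the Krylov--Safonov and Evans--Krylov theorems (in the parabolic form, e.g.\ \cite{Lie96} or \cite{Ger06}) give a uniform $C^{2+\gamma, 1+\gamma/2}$ bound on $\Sn \times [t_0, t_0 + 1]$ for every $t_0 \ge 0$. Near $t = 0$ the bound depends on the smooth initial data. The constants depend only on the bounds above, never on $t_0$. I expect this step to be the main obstacle, since it is the only nonlinear regularity input. The point to verify carefully is that the Hessian enters only through $b$, so concavity of $-\td{F}^{-k}$ is exactly what is needed, and that the nonhomogeneous dependence on $\bn u$ through $r^{\alpha}$ is harmless because $u$ is already $C^2$-bounded.

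Bootstrapping follows. Differentiating the equation in a spatial direction gives a linear uniformly parabolic equation for $\bn u$ whose coefficients $-f r^{\alpha}\dot{G}^{ij}$ are Hölder. Parabolic Schauder estimates, applied on unit time intervals, yield $C^{3+\gamma}$, and induction gives every $C^m$ bound on $u$. The bounds on $r$ follow from \eqref{eq:r-from-u} and the uniform positive lower bound on $r$. The passage between inverse Gauss coordinates and radial coordinates is a diffeomorphism $\xi(x) = X/r$ with derivative $r^{-1}(I - \xi\otimes\xi)b$, controlled by Corollary \ref{cor:curv-bound}, so $C^m$ bounds transfer by the inverse function theorem.
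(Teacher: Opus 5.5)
Your proposal is correct and follows the paper's proof. The common steps are uniform parabolicity from the two-sided curvature bounds, concavity of $\cF$ in the Hessian from the concavity of $G^{-1/k}$, Evans--Krylov on cylinders, a Schauder bootstrap on the differentiated equations, and transfer to $r(\xi,t)$ through the radial Gauss map $\Xi_t$. The only difference in the concavity step is cosmetic: you compose with the concave increasing map $z\mapsto -z^{-k}$, while the paper uses $D^2G \ge \frac{k+1}{kG}(DG)^2 \ge 0$.

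Two details should be tightened. First, the lower bound on $D\Xi_t$ needs the angle estimate $\langle x,\xi\rangle = u/r \ge C^{-1}$ from Lemma \ref{lem:u-bound}, in addition to the lower bound on the principal radii from Corollary \ref{cor:curv-bound}. Second, since the existence interval may a priori be finite, the interior estimates should be taken on backward cylinders $[t-\tau,t]$ inside it, rather than on forward intervals $[t_0,t_0+1]$.
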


\begin{proof}
By the two-sided curvature estimate \eqref{eq:curv-bound} and the $C^0$--$C^1$ estimates of Lemmas \ref{lem:r-bound} and \ref{lem:u-bound}, the eigenvalues of the curvature-radius tensor $b = \bn^2 u + u\bg$ stay in a compact subset of the positive cone, uniformly in time.

We work with the support-function equation \eqref{eq:support-eq}, which we write as a fully nonlinear parabolic equation on the fixed sphere:
\begin{equation}\label{eq:Fop}
u_t = \cF(\bn^2 u, \bn u, u, x),
\qquad
\cF(A, z, s, x) := -f(x)\big( s^2 + \abs{z}^2 \big)^{\alpha/2} G\big( A + s\bg \big) + \beta s .
\end{equation}
The concavity of $G^{-1/k}$, already used in \eqref{eq:Q3-bound}, gives
\begin{equation}\label{eq:G-convex}
D^2 G(b)[\eta, \eta] \ge \frac{k+1}{k\, G(b)}\big( DG(b)[\eta] \big)^2 \ge 0
\end{equation}
for every symmetric tensor $\eta$; hence $G$ is convex in $b$, and $\cF(A, z, s, x)$ is concave in $A$ for every fixed $(z, s, x)$. Moreover,
\begin{equation}\label{eq:F-elliptic}
\frac{\partial \cF}{\partial A_{ij}} = -f(x)\, r^{\alpha}\, \dot{G}^{ij},
\end{equation}
so \eqref{eq:Gdot-negative}, Corollary \ref{cor:curv-bound}, and Lemmas \ref{lem:r-bound}--\ref{lem:u-bound} show that \eqref{eq:Fop} is uniformly parabolic, with ellipticity constants independent of $t$. Finally, $\abs{u_t} = \abs{u p} \le C$ on every compact existence interval by Proposition \ref{prop:residual-bounds}.

We use the parabolic Evans--Krylov theorem in the following form (see \cite[Chapter 5]{Kry87} and \cite[Chapter 14]{Lie96}). Let $v$ be a smooth solution of $v_t = \cG(\bn^2 v, \bn v, v, x)$ on $\Sn \times [t_0 - \tau, t_0]$, where $\cG$ is smooth, uniformly elliptic in its Hessian argument, and concave in its Hessian argument. Then there are $\theta \in (0, 1)$ and $C$, depending only on $\tau$, the ellipticity constants, the bounds for $\cG$ and its derivatives on the range of $(\bn^2 v, \bn v, v)$, and the spatial bounds $\sup_{s \in [t_0-\tau,\, t_0]}\norm{v(\cdot, s)}_{C^2(\Sn)}$ and $\norm{v_t}_{L^{\infty}}$, but not on $t_0$, such that
\[
\norm{v}_{C^{2+\theta,\, 1+\theta/2}(\Sn \times [t_0 - \tau/2,\, t_0])} \le C ,
\]
where $C^{2+\theta,\, 1+\theta/2}$ denotes the standard parabolic H\"older space with exponent $\theta$ in the spatial variables and $\theta/2$ in the time variable. The dependence on the separation $\tau$ between the two cylinders is intrinsic to interior estimates; in the application below, $\tau$ is fixed once and for all.

The previously established estimates place the arguments of $\cF$ in a fixed compact subset of its admissible domain: the eigenvalues of $b = \bn^2 u + u\bg$ lie in a compact subset of the positive cone, and $r$, $u$, and $\abs{\bn u}$ are uniformly bounded. Consequently, the structural derivatives of $\cF$ are uniformly bounded there, and its ellipticity constants are independent of time; the concavity in $A$ and the uniform ellipticity are verified above. The bounds for $\abs{\bn^2 u}$ and $\abs{u_t}$ also control the lower-order arguments $(\bn u, u)$ in the parabolic H\"older topology: they give spatial Lipschitz control of $\bn u$ and, by interpolation, temporal H\"older control with exponent $1/2$.

By short-time existence there is $\tau > 0$ such that the solution is smooth on $[0, 2\tau]$, with estimates depending on the initial datum. For every $t \ge 2\tau$ in the maximal existence interval, the backward cylinder $\Sn \times [t-\tau, t]$ is contained in the existence interval, and the Evans--Krylov theorem applied on it gives
\begin{equation}\label{eq:ek}
\norm{u}_{C^{2+\theta,\, 1+\theta/2}(\Sn \times [t-\tau/2,\, t])} \le C
\end{equation}
with $C$ and $\theta \in (0, 1)$ depending on $\tau$ but independent of $t$. Once \eqref{eq:ek} is established, the derivatives of $\cF$ evaluated along the solution --- including $\partial\cF/\partial A_{ij} = -f(x)r^{\alpha}\dot{G}^{ij}$, which involves the Hessian of $u$ --- are uniformly H\"older continuous in the parabolic metric. The equations obtained by differentiating \eqref{eq:Fop} in space and in time are therefore linear uniformly parabolic equations with uniformly H\"older continuous coefficients, since $f$ is smooth. Applying the parabolic Schauder estimates \cite[Chapter 4]{Lie96} on nested backward cylinders and evaluating at their terminal time gives
\[
\norm{u(\cdot, t)}_{C^{m}(\Sn)} \le C_m
\]
for every $m$, uniformly in $t$. Together with the short-time interval $[0, 2\tau]$, this proves the support-function part of \eqref{eq:higher}.

It remains to transfer the estimates to the radial function $r(\xi, t)$, which is a function of the radial variable $\xi$ rather than the normal variable $x$ (cf. Remark \ref{rem:angle}). Define
\[
\Xi_t(x) = \frac{X(x, t)}{\abs{X(x, t)}} .
\]
From $X = ux + \bn u$ one computes
\[
D\Xi_t = \frac{1}{r}\big( I - \xi \otimes \xi \big) b .
\]
For $y \perp x$, writing $\xi = \langle x, \xi\rangle x + \xi^{T}$ with $\xi^{T} \perp x$ and using $\abs{\langle \xi, y\rangle} = \abs{\langle \xi^{T}, y\rangle} \le (1 - \langle x, \xi\rangle^2)^{1/2}\abs{y}$, one obtains
\[
\abs{(I - \xi\otimes\xi)y}^2 = \abs{y}^2 - \langle \xi, y\rangle^2 \ge \langle x, \xi\rangle^2 \abs{y}^2 = \frac{u^2}{r^2}\abs{y}^2 .
\]
Consequently, with $\ell > 0$ the uniform lower bound for the eigenvalues of $b$ provided by Corollary \ref{cor:curv-bound},
\[
\abs{D\Xi_t(v)} \ge \frac{u}{r^2}\, \abs{bv} \ge \frac{u_{-}\, \ell}{r_{+}^2}\, \abs{v}
\qquad \text{for all } v \in T_x\Sn .
\]
Thus $\Xi_t : \Sn \to \Sn$ is a diffeomorphism with a uniformly nondegenerate differential: it is bijective because $M_t$ is star-shaped with respect to the origin, and its differential is nondegenerate by the estimate above. Since $\Xi_t = (ux + \bn u)(u^2 + \abs{\bn u}^2)^{-1/2}$ involves the first derivatives of $u$, a $C^m$ estimate for $\Xi_t$ uses the $C^{m+1}$ norm of $u$; the estimates for $u$ having been established at every order, this gives uniform $C^m$ estimates for $\Xi_t$ for every $m$. Differentiating the inverse-map identity then gives uniform $C^m$ estimates for $\Xi_t^{-1}$, and the composition
\[
r(\cdot, t) = \big( u^2 + \abs{\bn u}^2 \big)^{1/2} \circ\, \Xi_t^{-1}
\]
yields the claimed estimates for the radial function. For the continuation argument in Proposition \ref{prop:lte}, only the support-function estimates are used.
\end{proof}

\begin{proposition}[Long-time existence]\label{prop:lte}
The maximal smooth strictly convex existence time is infinite.
\end{proposition}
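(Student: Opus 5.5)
We argue by contradiction with a standard continuation argument. Let $T^{*} \in (0, \infty]$ be the maximal time for which the smooth strictly convex solution of \eqref{eq:support-eq} exists, and suppose $T^{*} < \infty$. The goal is to show that every estimate needed to restart the flow holds uniformly on $[0, T^{*})$. The short-time existence discussion at the beginning of Section \ref{sec:reg} then extends the solution beyond $T^{*}$, which is the contradiction.

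First, Lemmas \ref{lem:r-bound} and \ref{lem:u-bound} give uniform two-sided bounds for $r$ and $u$ and a bound for $\abs{\bn u}$ on $[0, T^{*})$. These constants depend only on the initial datum. Proposition \ref{prop:residual-bounds} and Corollary \ref{cor:speed} apply on each compact subinterval $[0, T] \subset [0, T^{*})$. Since the resulting constants $h_{\pm}$ depend only on the initial slice, they hold on the whole of $[0, T^{*})$. The same is true of Proposition \ref{prop:aniso-c2}, whose constant depends only on the data and on $\max \Lambda/u$ at $t = 0$, and of Corollary \ref{cor:curv-bound}. Consequently the eigenvalues of $b = \bn^2 u + u\bg$ stay in a fixed compact subset $K$ of the positive cone for all $t < T^{*}$, and \eqref{eq:Fop} is uniformly parabolic there.

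Next, Proposition \ref{prop:higher} gives, for every $m$, bounds on $\norm{u(\cdot, t)}_{C^m(\Sn)}$ that are uniform for $t < T^{*}$. The time derivatives $\partial_t^j \bn^l u$ are then bounded as well, by differentiating the equation. This has two consequences. First, $u(\cdot, t)$ converges in $C^{\infty}(\Sn)$ to a limit $u(\cdot, T^{*})$ as $t \uparrow T^{*}$, by Arzel\`a--Ascoli together with uniqueness of the limit, since $u$ is uniformly Lipschitz in $t$. Second, $b[u(\cdot, T^{*})]$ has eigenvalues in $K$ because $K$ is closed. Hence $u(\cdot, T^{*})$ is the support function of a smooth strictly convex hypersurface, and $u(\cdot, T^{*}) \ge u_{-} > 0$ shows that this hypersurface encloses the origin.

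Finally, we restart the flow at $u(\cdot, T^{*})$ using the short-time existence result (\cite{Ger06}). Uniqueness of smooth solutions glues the two solutions into a smooth strictly convex solution on $[0, T^{*} + \delta)$, which contradicts maximality. The only delicate point is uniformity: one must confirm that no constant in Sections \ref{sec:c0c1}--\ref{sec:reg} depends on the length $T$ of the interval. The residual bounds were stated on compact subintervals $[0, T]$, but their constants depend only on the initial slice, and the Evans--Krylov step uses a fixed $\tau$ chosen from the initial data. Once this bookkeeping is checked, the argument is routine.
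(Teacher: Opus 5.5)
Your proposal is correct and follows essentially the same route as the paper. Both arguments take the time-independent bounds from Lemmas \ref{lem:r-bound}--\ref{lem:u-bound}, Corollary \ref{cor:speed}, Proposition \ref{prop:aniso-c2}, Corollary \ref{cor:curv-bound} and Proposition \ref{prop:higher}, convert them via \eqref{eq:Fop} into uniform $C^m$ bounds on $u_t$, use the resulting Lipschitz-in-time estimate to get $C^\infty$ convergence to a strictly convex limit as $t \uparrow T_{\max}$, and restart the flow by short-time existence. Your additional remarks (gluing by uniqueness, and the limit enclosing the origin because $u \ge u_{-} > 0$) only make explicit what the paper leaves implicit; Arzel\`a--Ascoli is unnecessary there, since the time-Lipschitz bound in each $C^m$ already makes the family Cauchy.
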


\begin{proof}
Lemmas \ref{lem:r-bound} and \ref{lem:u-bound}, Corollary \ref{cor:speed}, Proposition \ref{prop:aniso-c2}, Corollary \ref{cor:curv-bound}, and Proposition \ref{prop:higher} give time-independent $C^m$ bounds for every $m$. Suppose $T_{\max} < \infty$. Equation \eqref{eq:Fop} expresses $u_t$ as a smooth function of $\bn^2 u$, $\bn u$, $u$, and $x$, so the uniform estimates give
\begin{equation}\label{eq:ut-uniform}
\norm{u_t(\cdot, t)}_{C^m(\Sn)} \le C_m
\qquad \text{for every } m \text{ and all } t < T_{\max}.
\end{equation}
Consequently
\begin{equation}\label{eq:lipschitz}
\norm{u(\cdot, t) - u(\cdot, s)}_{C^m(\Sn)} \le C_m \abs{t - s},
\end{equation}
so $u(\cdot, t)$ converges in $C^{\infty}(\Sn)$ as $t \uparrow T_{\max}$ to a positive support function $u_{*}$ with $\bn^2 u_{*} + u_{*}\bg > 0$, the positivity being preserved by the uniform curvature bounds. Short-time existence from $u_{*}$ extends the solution past $T_{\max}$, contradicting maximality.
\end{proof}

This proves the long-time assertion in Theorem \ref{thm:main}.

\section{Exponential convergence to the soliton}\label{sec:conv}

In this section we prove the exponential convergence of the normalized flow to the stationary solution. The key quantity is the relative residual $p = \partial_t\log u$ introduced in Section \ref{subsec:residual}, and
\[
E(t) := \norm{p(\cdot, t)}_{L^{\infty}(\Sn)} .
\]
By Proposition \ref{prop:lte} the flow exists smoothly and remains strictly convex for all time, so Proposition \ref{prop:residual-bounds} and Remark \ref{rem:sharper-rate} apply on $[0, \infty)$: with
\[
\omega = \beta(\alpha - k - 1) > 0
\]
we have
\begin{equation}\label{eq:exp-rate}
E(t) \le C e^{-\omega t},
\qquad t \ge 0 ,
\end{equation}
where $C$ depends only on the initial datum and the fixed data. Note that the exponent $\omega$ is independent of the initial datum, and no curvature estimate enters its definition.

\begin{proof}[Proof of the convergence assertion in Theorem \ref{thm:main}]
Since $p = \partial_t\log u$, estimate \eqref{eq:exp-rate} gives
\begin{equation}\label{eq:logu-rate}
\norm{\partial_t\log u(\cdot, t)}_{L^{\infty}(\Sn)} \le C e^{-\omega t}.
\end{equation}
Consequently, for $t_2 > t_1 \ge 0$,
\[
\norm{\log u(\cdot, t_2) - \log u(\cdot, t_1)}_{L^{\infty}(\Sn)}
\le \int_{t_1}^{t_2} \norm{\partial_t\log u(\cdot, s)}_{L^{\infty}(\Sn)}\, ds
\le \frac{C}{\omega}\, e^{-\omega t_1}.
\]
Thus $\log u(\cdot, t)$ is Cauchy in $C^0(\Sn)$. There is a continuous function $u_{\infty} > 0$ such that
\begin{equation}\label{eq:c0-conv}
\norm{u(\cdot, t) - u_{\infty}}_{C^0(\Sn)} \le C e^{-\omega t}.
\end{equation}
Here the positivity and the passage from $\log u$ to $u$ follow from the uniform two-sided $C^0$ bounds in Lemmas \ref{lem:r-bound} and \ref{lem:u-bound}.

We next upgrade this convergence to all derivatives. By Proposition \ref{prop:higher} and equation \eqref{eq:support-eq}, which converts the established spatial estimates into time-derivative bounds, the coefficients $a^{ij}$, $d^i$, and $c$ of the linear equation $\cC[p] = 0$ in \eqref{eq:Cp} have uniform space-time H\"older bounds of every order on unit time cylinders, and $a^{ij}$ is uniformly positive definite. For every integer $m \ge 0$, the interior parabolic Schauder estimates on the translated time cylinders
\[
\Sn \times [t-1, t+1],
\qquad t \ge 1,
\]
and a standard bootstrap give
\[
\norm{p(\cdot, t)}_{C^m(\Sn)} \le C_m \norm{p}_{L^{\infty}(\Sn \times [t-1, t+1])}.
\]
Using \eqref{eq:exp-rate} on the right-hand side, we obtain
\begin{equation}\label{eq:p-cm}
\norm{p(\cdot, t)}_{C^m(\Sn)} \le C_m e^{-\omega t},
\qquad t \ge 1 .
\end{equation}
Since $u_t = up$, the product estimates, Proposition \ref{prop:higher}, and \eqref{eq:p-cm} imply
\begin{equation}\label{eq:ut-cm}
\norm{u_t(\cdot, t)}_{C^m(\Sn)} \le C_m e^{-\omega t},
\qquad t \ge 1 .
\end{equation}
Therefore, for $t_2 > t_1 \ge 1$,
\[
\norm{u(\cdot, t_2) - u(\cdot, t_1)}_{C^m(\Sn)} \le \int_{t_1}^{t_2} \norm{u_t(\cdot, s)}_{C^m(\Sn)}\, ds \le C_m e^{-\omega t_1}.
\]
It follows that $u(\cdot, t)$ converges to the same function $u_{\infty}$ in $C^m(\Sn)$ for every $m$, and
\begin{equation}\label{eq:u-cm}
\norm{u(\cdot, t) - u_{\infty}}_{C^m(\Sn)} \le C_m e^{-\omega t}.
\end{equation}
The estimates above are stated for $t \ge 1$; enlarging $C_m$ to include the compact interval $[0, 1]$ proves \eqref{eq:thm-rate} for all $t \ge 0$, and the short-time smoothness already justifies this step. In particular, $u_{\infty}$ is smooth. The uniform two-sided curvature bound \eqref{eq:curv-bound} shows that the limit hypersurface remains strictly convex. Moreover, the inverse Gauss map parametrization \eqref{eq:position} and \eqref{eq:u-cm} show that $M_t$ converges exponentially to this hypersurface in $C^{\infty}$.

Finally, $p \to 0$ and $u \to u_{\infty}$ in $C^{\infty}$. Passing to the limit in
\[
p = \beta - \frac{f r^{\alpha}G(b[u])}{u}
\]
gives
\[
f(x) r_{\infty}^{\alpha} \sigma_k(\kappa[u_{\infty}]) = \beta u_{\infty}.
\]
Thus the limit is a smooth strictly convex stationary solution of \eqref{eq:stationary}, completing the proof.
\end{proof}

\begin{remark}[Optimality of the exponent]\label{rem:optimal}
The exponent $\omega = \beta(\alpha-k-1)$ in \eqref{eq:thm-rate} cannot be replaced by a larger one in general, even for a fixed positive angular function $f$. Indeed, let $u_{\infty}$ be the stationary support function and consider the homothetic family $u(x, t) = a(t)u_{\infty}(x)$. By the homogeneity of $r$ and $G$, the support-function equation \eqref{eq:support-eq} reduces to
\[
a' = \beta a\big( 1 - a^{m} \big),
\qquad m = \alpha - k - 1,
\]
whose solution with $a(0) = a_0 > 0$ is
\[
a(t) = \Big[ 1 + \big( a_0^{-m} - 1 \big) e^{-m\beta t} \Big]^{-1/m} .
\]
For $a_0 \neq 1$,
\[
a(t) - 1 = -\frac{1}{m}\big( a_0^{-m} - 1 \big)\, e^{-m\beta t} + O\big( e^{-2m\beta t} \big),
\]
so homothetic copies of the stationary solution already realize the decay rate $e^{-\omega t}$.
\end{remark}

\section{Uniqueness of the stationary solution}\label{sec:unique}

\begin{proposition}\label{prop:unique}
If $\alpha > k+1$, equation \eqref{eq:stationary} has at most one smooth strictly convex positive solution.
\end{proposition}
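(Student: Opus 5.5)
Take two smooth strictly convex positive solutions $u_1, u_2$ of \eqref{eq:stationary} and compare them through the ratio $\phi = u_1/u_2$. This is the standard maximum-principle argument for uniqueness with strictly monotone right-hand side, carried out in support-function form. The claim to prove is $\max_{\Sn}\phi \le 1$; reversing the roles of $u_1$ and $u_2$ then gives $\phi \equiv 1$.

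Let $x_0$ be a point where $\phi$ attains its maximum $a := \phi(x_0)$, and suppose for contradiction that $a > 1$. Put $v = a u_2$. Then $v \ge u_1$ on $\Sn$ with equality at $x_0$. Consequently, at $x_0$ we have
\[
\bn v = \bn u_1, \qquad \bn^2 v \ge \bn^2 u_1 .
\]
It follows that $b[v] \ge b[u_1]$ as matrices at $x_0$. Both matrices are positive definite, since $b[v] = a\, b[u_2] > 0$.

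Next, record the effect of the dilation. By homogeneity, $r[v] = a\, r[u_2]$ and $G(b[v]) = a^{-k} G(b[u_2])$, so $v$ satisfies
\[
f r[v]^{\alpha} G(b[v]) = a^{\alpha-k} f r[u_2]^{\alpha} G(b[u_2]) = a^{\alpha-k}\beta u_2 = a^{\alpha-k-1}\beta v .
\]
Now compare $u_1$ and $v$ at $x_0$. There $u_1 = v$ and $\bn u_1 = \bn v$, hence $r[u_1] = r[v]$ by \eqref{eq:r-from-u}. Since $G$ is decreasing on the positive cone by \eqref{eq:Gdot-negative}, the matrix inequality $b[v] \ge b[u_1]$ gives $G(b[v]) \le G(b[u_1])$. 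Combining these,
\[
a^{\alpha-k-1}\beta v(x_0) = f r[v]^{\alpha}G(b[v]) \le f r[u_1]^{\alpha}G(b[u_1]) = \beta u_1(x_0) = \beta v(x_0).
\]
This yields $a^{\alpha-k-1} \le 1$. Because $\alpha > k+1$, this forces $a \le 1$, a contradiction.

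The only point needing care is the monotonicity of $G$ under the matrix order: one must make sure that $b[v] \ge b[u_1] > 0$ really implies $G(b[v]) \le G(b[u_1])$. I would justify this by integrating $\dot{G}$ along the segment $b[u_1] + s\,(b[v] - b[u_1])$, $s \in [0,1]$. The segment stays in the positive cone, and along it $\dot G^{ij}\eta_{ij} \le 0$ for $\eta = b[v] - b[u_1] \ge 0$, because $-\dot G$ is positive definite. Everything else is the homogeneity bookkeeping above. Strict supercriticality enters exactly through the exponent $\alpha-k-1$, which is consistent with the Ding--Li monotonicity condition discussed in the introduction.
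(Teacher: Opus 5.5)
Your proof is correct and is essentially the paper's argument. The paper also works at a maximum point of the ratio $u/v$, and uses $b[u]\le C\,b[v]$, $r[u]=C\,r[v]$ and the $(-k)$-homogeneity of $G$ to reach $C\ge C^{\alpha-k}$. Your explicit dilation $v=a u_2$ and your segment argument for the monotonicity of $G$ spell out the same steps in more detail.
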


\begin{proof}
Suppose that $u$ and $v$ are two solutions and let
\[
C = \max_{\Sn} \frac{u}{v} .
\]
At a maximum point $x_0$ of $u/v$,
\[
u = Cv,
\qquad
\bn u = C\bn v,
\qquad
b[u] \le C b[v].
\]
Consequently $r[u] = Cr[v]$ and
\[
\sigma_k(\kappa[u]) \ge C^{-k}\sigma_k(\kappa[v]).
\]
Write $S[u] := f(x) r[u]^{\alpha}\sigma_k(\kappa[u])$ for the contracting speed factor. Using the stationary equations at $x_0$ gives
\[
\beta Cv = S[u] \ge C^{\alpha - k} S[v] = C^{\alpha-k}\beta v .
\]
Thus $C \ge C^{\alpha-k}$. Since $\alpha - k > 1$, this rules out $C > 1$. Therefore $u \le v$. Interchanging $u$ and $v$ proves $u = v$.
\end{proof}

\begin{remark}\label{rem:dl-uniqueness}
Proposition \ref{prop:unique} is consistent with the general uniqueness criterion of Ding--Li \cite[Theorem 6.1]{DL25} for star-shaped solutions of $F(\kappa) = G(X, \nu)$ with $\partial_{\rho}(\rho G) < 0$. Writing \eqref{eq:stationary} in the rooted form
\[
\sigma_k(\kappa)^{1/k} = \Big( \frac{\beta}{f(\nu)} \Big)^{1/k} \langle X, \nu\rangle^{1/k} \abs{X}^{-\alpha/k},
\]
the right-hand side is positive on the positive-support domain $\langle X, \nu\rangle > 0$, which contains every convex hypersurface enclosing the origin, and
\[
\partial_{\rho}\big( \rho G(\rho\xi, \nu) \big) = -\frac{\alpha - k - 1}{k}\, G(\rho\xi, \nu) < 0
\qquad (\rho = \abs{X})
\]
exactly when $\alpha > k+1$. The short support-function proof above keeps the present paper self-contained.
\end{remark}

\begin{proof}[Proof of Corollary \ref{cor:main}]
Take $\alpha = n+1-q > k+1$ and $f = \beta(n+1)\phi$, and evolve any smooth strictly convex initial hypersurface enclosing the origin under \eqref{eq:normflow}. Theorem \ref{thm:main} gives a smooth strictly convex limit satisfying \eqref{eq:stationary}, which with this choice of $f$ and $\alpha$ is precisely \eqref{eq:cor-pde}. Equivalently, the limit body $K_{\infty}$ satisfies $dC_{k,q}(K_{\infty}, \cdot) = \phi(x)\,dx$ by \eqref{eq:Ckq-density}. Uniqueness follows from Proposition \ref{prop:unique}.
\end{proof}

\begin{corollary}[Constant-data case]\label{cor:isotropic}
Let $f \equiv f_0 > 0$, $1 \le k \le n$, and $\alpha > k+1$. Then for every smooth, closed, strictly convex initial hypersurface enclosing the origin, the normalized flow \eqref{eq:normflow} exists for all time and converges exponentially in $C^{\infty}$ to the round sphere of radius
\begin{equation}\label{eq:sphere-radius}
R_{*} = f_0^{-1/(\alpha-k-1)} .
\end{equation}
\end{corollary}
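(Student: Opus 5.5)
The plan is to specialize Theorem \ref{thm:main} to $f \equiv f_0$ and then identify the unique stationary solution explicitly. The theorem gives, for every smooth strictly convex initial hypersurface enclosing the origin, long-time existence of \eqref{eq:normflow} and exponential $C^{\infty}$ convergence, at rate $\omega = \beta(\alpha-k-1)$, to the unique smooth strictly convex solution $u_{\infty}$ of \eqref{eq:stationary}. Proposition \ref{prop:unique} gives uniqueness. So it suffices to exhibit one strictly convex solution of
\[
f_0 r^{\alpha}\sigma_k(\kappa) = \beta u
\]
and show that it is the sphere of radius $R_{*}$.

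Try the constant support function $u \equiv R$, the round sphere of radius $R$ centred at the origin. Then $\bn u = 0$, so $r = R$ by \eqref{eq:r-from-u}. Also $b = R\bg$, so every $\kappa_i = R^{-1}$ and
\[
\sigma_k(\kappa) = \binom{n}{k}R^{-k} = \beta R^{-k}.
\]
Substituting into the stationary equation gives
\[
f_0 R^{\alpha}\beta R^{-k} = \beta R,
\qquad\text{that is,}\qquad
f_0 R^{\alpha-k-1} = 1 .
\]
Since $\alpha - k - 1 > 0$, this has the unique positive root $R = f_0^{-1/(\alpha-k-1)} = R_{*}$, as claimed in \eqref{eq:sphere-radius}. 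The function $u \equiv R_{*}$ is smooth and positive with $\bn^2 u + u\bg = R_{*}\bg > 0$. By Proposition \ref{prop:unique}, it is therefore the stationary solution $u_{\infty}$ of Theorem \ref{thm:main}.

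The estimate \eqref{eq:thm-rate} with $u_{\infty} \equiv R_{*}$ then gives exponential $C^{\infty}$ convergence of $u(\cdot,t)$ to the constant $R_{*}$. Through the inverse Gauss parametrization \eqref{eq:position}, this is exponential $C^{\infty}$ convergence of $M_t$ to the round sphere of radius $R_{*}$ centred at the origin.

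There is no real obstacle here. The only point to check is that the hypotheses of Theorem \ref{thm:main} allow a constant $f$: a positive constant is a positive smooth function, so they do. The a priori estimates then apply verbatim, and Corollary \ref{cor:iso-c2} gives the simpler $C^2$ estimate in this case. One could also check consistency with the $C^0$ bounds in Lemma \ref{lem:r-bound}: for constant $f$ the thresholds $(\beta/(c_k f_{\pm}))^{1/m}$ both equal $f_0^{-1/m} = R_{*}$, as they should.
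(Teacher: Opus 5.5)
Your proposal is correct and follows essentially the same route as the paper: specialize Theorem~\ref{thm:main} to $f\equiv f_0$, use Proposition~\ref{prop:unique} to identify the limit, and substitute a sphere into \eqref{eq:stationary} to get $f_0R^{\alpha-k-1}=1$. The only difference is small: the paper first shows the limit is a sphere centred at the origin, using rotational invariance of the equation together with uniqueness, while you verify directly that $u\equiv R_*$ solves \eqref{eq:stationary} and then invoke uniqueness, which is slightly more direct.
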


\begin{proof}
This is the constant-data specialization of Theorem \ref{thm:main}. The stationary limit is unique by Proposition \ref{prop:unique}. Rotating the limit gives another solution of the same equation, hence uniqueness makes it rotationally invariant. Substitution of a sphere of radius $R$ into \eqref{eq:stationary} gives $f_0 R^{\alpha-k-1}\binom{n}{k} = \beta$, and since $\beta = \binom{n}{k}$ this gives \eqref{eq:sphere-radius}. This is the strictly supercritical, uniformly convex case of \cite{LSW20}.
\end{proof}

\begin{remark}[The critical exponent]\label{rem:critical}
The hypothesis $\alpha > k+1$ enters the argument at several distinct points: the uniform radial barriers of Lemma \ref{lem:r-bound} use $m = \alpha - k - 1 > 0$ in \eqref{eq:rmin}--\eqref{eq:rmax}; the absorption in the $C^2$ estimate requires only $\alpha \ge k+1$ (see \eqref{eq:Qc-square}); the exponential decay of the residual requires the strict inequality, since the reaction coefficient in \eqref{eq:residual-eq} is $c = (\alpha-k-1)S/u$; and the uniqueness comparison in Proposition \ref{prop:unique} uses $\alpha - k > 1$. Thus the principal-radius estimate is conditional and remains valid at the critical exponent, whereas the complete theorem relies on strict supercriticality for uniform scale control, residual decay, and uniqueness. At the critical exponent the obstruction is genuine rather than technical. Indeed, when $\alpha = k+1$ and $f \equiv f_0$ is constant, a round sphere of radius $R(t)$ evolves under \eqref{eq:normflow} according to
\begin{equation}\label{eq:sphere-ode}
R' = -f_0 R^{\alpha}\binom{n}{k} R^{-k} + \beta R = \beta\big( 1 - f_0 R^{\alpha-k-1} \big) R,
\end{equation}
using $\beta = \binom{n}{k}$. At $\alpha = k+1$ this reduces to $R' = \beta(1 - f_0)R$: if $f_0 = 1$, every radius is stationary, so the stationary equation has a continuum of solutions and uniqueness fails; if $f_0 \neq 1$, the radius drifts exponentially towards $0$ or $\infty$, so convergence to a stationary hypersurface fails. This explains why the conditional $C^2$ estimate at the critical exponent in Proposition \ref{prop:aniso-c2} does not imply the conclusions of Theorem \ref{thm:main} there.
\end{remark}

{\bf AI usage.}
The authors used GPT-5.6 Sol and KIMI K3 to assist with routine computations, and to help identify potential issues in the arguments. In particular, AI tools found two papers closely related to this work, namely references \cite{BIS21} and \cite{DL25}.  The authors have thoroughly checked all mathematical derivations and proofs and take full responsibility for the entire content of this manuscript.

{\bf Acknowledgements.}
W. Sheng was partially supported by National Key R$\&$D Program of China (No. 2022YFA1005500) and Natural Science Foundation of China under Grant No. 12571063.



\begin{thebibliography}{99}

\bibitem{And94} B. Andrews, Contraction of convex hypersurfaces in Euclidean space, \emph{Calc. Var. Partial Differential Equations} \textbf{2} (1994), 151--171.

\bibitem{BIS21} P. Bryan, M. N. Ivaki and J. Scheuer, Parabolic approaches to curvature equations, \emph{Nonlinear Anal.} \textbf{203} (2021), Paper No. 112174, 24 pp.

\bibitem{CNS85} L. Caffarelli, L. Nirenberg and J. Spruck, The Dirichlet problem for nonlinear second-order elliptic equations III: functions of the eigenvalues of the Hessian, \emph{Acta Math.} \textbf{155} (1985), 261--301.

\bibitem{DL23} S. Ding and G. Li, A class of inverse curvature flows and $L_p$ dual Christoffel--Minkowski problem, \emph{Trans. Amer. Math. Soc.} \textbf{376} (2023), 697--752.

\bibitem{DL25} S. Ding and G. Li, A flow method for curvature equations, \emph{Nonlinear Anal.} \textbf{261} (2025), Paper No. 113873.

\bibitem{Ger06} C. Gerhardt, \emph{Curvature Problems}, International Press, Somerville, MA, 2006.

\bibitem{GLL12} P. Guan, J. Li and Y. Y. Li, Hypersurfaces of prescribed curvature measure, \emph{Duke Math. J.} \textbf{161} (2012), 1927--1942.

\bibitem{GLM09} P. Guan, C.-S. Lin and X.-N. Ma, The existence of convex body with prescribed curvature measures, \emph{Int. Math. Res. Not. IMRN} (2009), no. 11, 1947--1975.

\bibitem{GM03} P. Guan and X.-N. Ma, The Christoffel--Minkowski problem I: Convexity of solutions of a Hessian equation, \emph{Invent. Math.} \textbf{151} (2003), 553--577.

\bibitem{HLYZ16} Y. Huang, E. Lutwak, D. Yang and G. Zhang, Geometric measures in the dual Brunn--Minkowski theory and their associated Minkowski problems, \emph{Acta Math.} \textbf{216} (2016), 325--388.

\bibitem{Kry87} N. V. Krylov, \emph{Nonlinear Elliptic and Parabolic Equations of the Second Order}, Reidel, Dordrecht, 1987.

\bibitem{LSW20a} Q.-R. Li, W. Sheng and X.-J. Wang, Flow by Gauss curvature to the Aleksandrov and dual Minkowski problems, \emph{J. Eur. Math. Soc.} \textbf{22} (2020), 893--923.

\bibitem{LSW20} Q.-R. Li, W. Sheng and X.-J. Wang, Asymptotic convergence for a class of fully nonlinear curvature flows, \emph{J. Geom. Anal.} \textbf{30} (2020), 834--860.

\bibitem{LXZ22} H. Li, B. Xu and R. Zhang, Asymptotic convergence for a class of anisotropic curvature flows, \emph{J. Funct. Anal.} \textbf{282} (2022), Paper No. 109460, 34 pp.

\bibitem{Lie96} G. M. Lieberman, \emph{Second Order Parabolic Differential Equations}, World Scientific, Singapore, 1996.

\bibitem{Sch14} R. Schneider, \emph{Convex Bodies: The Brunn--Minkowski Theory}, 2nd ed., Cambridge University Press, Cambridge, 2014.

\bibitem{SY27} W. Sheng and J. Yang, Long time behavior of a class of non-homogeneous anisotropic fully nonlinear curvature flows, \emph{J. Funct. Anal.} \textbf{292} (2027), Paper No. 111668.

\end{thebibliography}
\end{document}